\documentclass[12pt,reqno]{amsart}

\setlength{\columnseprule}{0.4pt}
\setlength{\topmargin}{0cm}
\setlength{\oddsidemargin}{.25cm}
\setlength{\evensidemargin}{.25cm}
\setlength{\textheight}{22.5cm}
\setlength{\textwidth}{15.5cm}

\usepackage{amsfonts,amsmath,amsthm}
\usepackage{cases}
\usepackage{amssymb,epsfig}
\usepackage{enumerate} 

\usepackage{color} 
\definecolor{vert}{rgb}{0,0.6,0}

\usepackage[text={425pt,650pt},centering]{geometry}
\usepackage{graphicx}
\usepackage{epsfig}
\usepackage{tikz}
\usepackage{caption}
\usepackage{color} 
\definecolor{vert}{rgb}{0,0.6,0}

\numberwithin{figure}{section}

\theoremstyle{plain}
\newtheorem{thm}{Theorem}[section]

\newtheorem{defn}{Definition}

\newtheorem{ex}{Example}
\newtheorem{lem}[thm]{Lemma}
\newtheorem{cor}[thm]{Corollary}
\newtheorem{prop}[thm]{Proposition}
\theoremstyle{remark}
\newtheorem{rem}{\bf{Remark}}
\numberwithin{equation}{section}



\newcommand{\R}{\mathbb{R}}

\newcommand{\cA}{\mathcal{A}}

\newcommand{\cC}{\mathcal{C}}

\newcommand{\AC}{{\rm AC\,}}

\newcommand{\BUC}{{\rm BUC\,}}
\newcommand{\UC}{{\rm UC\,}}

\newcommand{\Lip}{{\rm Lip\,}}






\newcommand{\gam}{\gamma}
\newcommand{\del}{\delta}
\newcommand{\ep}{\varepsilon}
\newcommand{\kap}{\kappa}
\newcommand{\lam}{\lambda}

\newcommand{\Del}{\Delta}
\newcommand{\Gam}{\Gamma}

\newcommand{\ol}{\overline}
\newcommand{\ul}{\underline}

\newcommand{\supp}{{\rm supp}\,}

\newcommand{\Div}{{\rm div}\,}



\begin{document}
\title[On large time behavior]{Remarks on large time behavior of\\ 
level-set mean curvature flow equations
\\ with driving and source terms}
\author[Y. GIGA, H. MITAKE, H. V. TRAN]
{Yoshikazu Giga, Hiroyoshi Mitake, Hung V. Tran}

\thanks{
The work of YG was partially supported by Japan Society for the Promotion of Science (JSPS) through grants KAKENHI \#19H00639, \#18H05323, \#26220702,  \#16H03948.
The work of HM was partially supported by the JSPS grants: KAKENHI \#19K03580, \#19H00639, \#16H03948.
The work of HT is partially supported by NSF grant DMS-1664424 and NSF CAREER grant DMS-1843320.
}

\address[Y. Giga, H. Mitake]{
Graduate School of Mathematical Sciences, 
University of Tokyo 
3-8-1 Komaba, Meguro-ku, Tokyo, 153-8914, Japan}
\email{labgiga@ms.u-tokyo.ac.jp, mitake@ms.u-tokyo.ac.jp}


\address[H. V. Tran]
{
Department of Mathematics, 
University of Wisconsin Madison, Van Vleck Hall, 480 Lincoln Drive, Madison, Wisconsin 53706, USA}
\email{hung@math.wisc.edu}

\date{\today}

\keywords{Asymptotic speed; Large time behavior; Birth and spread type nonlinear PDEs; Fully nonlinear parabolic equations; Forced Mean Curvature Flow; Crystal growth}

\subjclass[2010]{
35B40, 
35K93, 
35K20. 
}

\maketitle

\begin{abstract}
We study a level-set mean curvature flow equation  with driving and source terms, and  
establish convergence results on the asymptotic behavior of solutions as time goes to infinity under some additional assumptions. 
We also study the associated stationary problem in details in a particular case, 
and establish Alexandrov's theorem in two dimensions in the viscosity sense, which is of independent interest.     
\end{abstract}

\section{Introduction}
In this paper, we study the large time behavior of the viscosity solution $u$ to a 
degenerate parabolic PDE of the form 
\[
{{\rm(C)}\qquad}
\begin{cases}
\displaystyle
u_t-\left(\Div\Big(\frac{Du}{|Du|}\Big)+1\right)|Du|=f(x) \quad &\text{ in } \R^n \times(0,\infty),\\
u(\cdot,0)=u_0 \quad &\text{ on } \R^n, 
\end{cases}
\]
where $n\geq 2$, and $f:\R^n \to \R$ is the source term, which is nontrivial and  satisfies
\begin{equation}\label{f-source}
f \in \Lip(\R^n), \ f\ge0, \text{ and } \supp(f)\subset B(0,R)  \text{ for some} \ R>0.
\end{equation}
In other words, $f \geq 0$ is Lipschitz continuous on $\R^n$ such that the 
support of $f$ is contained in an open ball $B(0,R)$ of radius $R>0$ centered 
at the origin.
The initial condition $u_0:\R^n \to \R$ is in $\BUC(\R^n)$, the space of all bounded uniformly continuous functions on $\R^n$.
Here, $u:\R^n\times[0,\infty)\to\R$ is a unknown function, and 
$u_t$, $Du$ denote the time derivative and the spatial gradient of $u$, respectively. 
Note that if the source term $f$ is identically equal to zero, the equation (C) is nothing 
but the level set flow equation of the motion by 
mean curvature plus a constant ($=1$) driving force. Its analytic 
foundation like well-posedness and comparison principle has been well 
established by adjusting the theory of viscosity solutions \cite{CGG, ES1}. 
See also e.g. \cite{G-book} and references therein for further development.
We are always concerned with viscosity solutions in this paper, and the term ``viscosity" is omitted henceforth.

Equation (C) was derived as a continuum limit of the birth and spread model in \cite{GMT} (see also \cite{GMOT}) with a motivation to describe crystal growth in supersaturated environments.   
In \cite{GMOT}, we obtained the existence of the large time average of $u$ which was denoted by the \textit{asymptotic speed}.
More precisely, we showed that, locally uniformly for $x\in \R^n$,
\begin{equation}\label{asymp-speed}
c=\lim_{t\to\infty} \frac{u(x,t)}{t} 
\end{equation}
exists under assumption \eqref{f-source}. 
We then studied qualitative properties of asymptotic speed $c=c_f$ in some specific settings. 

In this paper, as a next step, we investigate the large time asymptotic of $u$, that is, behavior of $u(x,t)$ as $t \to \infty$.
We establish the convergence result
\begin{equation}\label{conv}
u(x,t)-ct\to v_\infty(x) \quad\text{locally uniformly for } x \in \R^n \ \text{as} \ t\to\infty,  
\end{equation}
where $v_\infty$ is a solution to 
\begin{equation}\label{eq:static}
-\left(\Div\Big(\frac{Dv}{|Dv|}\Big)+1\right)|Dv|=f(x)-c\quad\text{in} \ \R^n  
\end{equation}
in two specific cases.  
If \eqref{conv} holds, we  denote by $v_\infty$ the \textit{asymptotic profile} of $u$ with given initial data $u_0$. 
The question on whether \eqref{conv} holds or not was addressed in \cite{Giga-ICM}, and it seems quite difficult to be resolved in the most general setting at this moment.

For asymptotic profiles, it is easy to derive scaled one of the form 
\[
\lim_{\lam\to\infty} \frac{u(\lam x,\lam t)}{\lam}= \max\{c(t-|x|), 0\}
\]
as stated in \cite{Giga-ICM}. See also \cite{H} for the first order problem. 

\subsection{Main results}
We now describe two specific cases in which we are able to obtain \eqref{conv}.
In Case 1, we consider the radially symmetric setting, that is, we assume the following.
\begin{itemize}
\item[(A1)]   $f(x) = \tilde f(|x|)$ for all $x\in \R^n$, where  $\tilde f\in C_c^1([0,\infty))$, 
i.e., $f\in C^1([0,\infty))$ has a compact support in $[0,\infty)$.

\item[(A2)] $u_0(x)=\tilde{u}_0(|x|)$ for all $x\in \R^n$, where $\tilde u_0 \in \BUC([0,\infty))$.  
\end{itemize}

\medskip

\noindent In Case 2, we put a different set of assumptions.
\begin{itemize}
\item[(A3)] $f\in C^1_c(\R^n)$ and the set $U:=\{x\in \R^n\,:\, f(x)=\max_{\R^n}f\}$ contains $B(x_0,n-1)$ for some $x_0\in\R^n$.

\item[(A4)] $U=\bigcup_{i\in I} E_i$, where $I \neq \emptyset$ is an index set, and for each $i\in I$, $E_i \subset \R^n$ is a closed set with nonempty interior and $C^2$-boundary satisfying $\kappa+1 \geq 0$ on $\partial E_i$.
Here, for $x\in \partial E_i$, $\kappa(x)$ denotes the mean curvature of $\partial E_i$ in the direction of the outer normal vector to $E_i$ at $x$.

\item[(A5)] $u_0 \equiv 0$.
\end{itemize}

\smallskip

\noindent Here are our main convergence results.

\begin{thm}\label{thm:main1}
Assume  {\rm (A1)--(A2)}. 
Let $u$ be the solution to {\rm (C)}.
Then \eqref{conv} holds, where $v_\infty$ is a solution to \eqref{eq:static}.
\end{thm}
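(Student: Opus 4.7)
The plan is to use rotational symmetry to reduce to a one--dimensional PDE and then combine Barles--Perthame's method of half--relaxed limits with an ODE comparison. Under (A1)--(A2) both (C) and its data are rotationally invariant, so by uniqueness of viscosity solutions $u(x,t)=\tilde u(|x|,t)$ for some $\tilde u:[0,\infty)\times[0,\infty)\to\R$. Writing $r=|x|$, the mean curvature term formally becomes $\pm(n-1)/r$ according to $\sgn(\tilde u_r)$, reducing (C) to a degenerate parabolic equation on $(0,\infty)\times(0,\infty)$, understood in the viscosity sense. Using the existence of the asymptotic speed $c$ from \cite{GMOT} together with radial stationary barriers of the form $ct+\psi^{\pm}(|x|)$, with $\psi^{\pm}$ chosen to solve \eqref{eq:static} outside $\supp(\tilde f)$, one shows that $\{u(\cdot,t)-ct\}_{t\ge0}$ is uniformly bounded on compact subsets of $\R^n$. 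Standard viscosity arguments applied to translates of $u$ in $x$ and $t$, using $u_0\in\BUC(\R^n)$ and $f\in\Lip(\R^n)$, then give a uniform Lipschitz estimate in $x$ and equicontinuity in $t$.

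I next define the half--relaxed limits
\[
\bar v(x)=\limsup_{t\to\infty,\,y\to x}(u(y,t)-ct),\qquad \underline v(x)=\liminf_{t\to\infty,\,y\to x}(u(y,t)-ct).
\]
By the previous estimates both are finite with $\underline v\le\bar v$, and by Barles--Perthame stability $\bar v$ is a viscosity subsolution and $\underline v$ is a viscosity supersolution of \eqref{eq:static}. Both are radial functions of $|x|$ in view of the radial reduction.

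It then suffices to establish $\bar v\le\underline v$. The radial structure reduces this to a one--dimensional comparison. Outside $\supp(\tilde f)$, for monotone radial profiles the stationary equation becomes a first--order ODE whose viscosity solution is unique up to an additive constant. Inside $\supp(\tilde f)\subset B(0,R)$, a comparison principle on the bounded ball $B(0,R)$ applied to the radial reductions, using the Lipschitz regularity from the a priori step, forces the two profiles to coincide up to a constant, and the additive constant is pinned down by matching values at $r=0$ through the initial data. Setting $v_\infty:=\bar v=\underline v$ then yields the locally uniform convergence \eqref{conv}, and $v_\infty$, being simultaneously a sub-- and a supersolution of \eqref{eq:static}, solves it.

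The main obstacle is this final uniqueness/comparison step. The stationary equation \eqref{eq:static} admits many viscosity solutions globally on $\R^n$ (any constant can be added, and the curvature operator is singular at $Dv=0$), so no global comparison principle is available. Radial symmetry makes uniqueness tractable, but the analysis requires careful handling at the critical radius $r=n-1$ where the coefficient $1-(n-1)/r$ in the reduced ODE for decreasing radial profiles changes sign, at points where $\tilde u_r=0$ (where the mean curvature term is singular), and in matching the additive constant to $u_0$ through the dynamics at the origin.
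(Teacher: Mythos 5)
There is a genuine gap, and it sits exactly at the step you yourself flag as ``the main obstacle'': you never actually prove $\bar v\le\underline v$, or even that $\bar v=\underline v$ at a single point. The stationary equation \eqref{eq:static} (equivalently \eqref{eq:st-rad} after the radial reduction) has no zeroth-order term and is non-coercive, so there is no comparison principle ``on the bounded ball $B(0,R)$'' to invoke, and solutions are certainly not unique up to an additive constant there; the paper's whole Section 2.1 (Lemmas \ref{lem:unique1}--\ref{lem:unique4} and Theorem \ref{thm:unique}) exists precisely because comparison only holds \emph{conditionally}: a subsolution and a supersolution bounded from above can be compared only once they are ordered on the uniqueness set $\tilde\cA=\{r\ge n-1:\tilde f(r)=c\}$. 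The missing idea is the mechanism that forces the half-relaxed limits to agree on $\tilde\cA$: since $-\frac{n-1}{r}\tilde\phi_r-|\tilde\phi_r|\le 0$ for $r\ge n-1$ and $h=0$ on $\tilde\cA$, the function $t\mapsto u(x,t)-ct$ is nondecreasing (and bounded) for $|x|\in\tilde\cA$, hence converges there, giving $\psi^+=\psi^-$ on $\tilde\cA$; only then does Theorem \ref{thm:unique} propagate equality to all of $[0,\infty)$. Without this monotonicity-on-$\tilde\cA$ step (or a substitute), the half-relaxed limit scheme stalls.

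Your proposal to pin the additive constant ``by matching values at $r=0$ through the initial data'' goes in the wrong direction. In the region $r<n-1$ the admissible curves satisfy $\gamma'\le 1-\frac{n-1}{\gamma}<0$, so characteristics point inward and cannot exit $B(0,n-1)$: the asymptotic profile at the origin is determined by information flowing in from $\tilde\cA$, not by $u_0$ near $0$ (see the first example after Proposition \ref{prop:nonrad}, which shows that initial data supported in $B(0,n-1)$ is irrelevant to the limit). A further, more minor, flaw: the claimed uniform-in-$t$ Lipschitz bound via spatial translates fails, since comparison with $u(\cdot+h,\cdot)$ only yields an error of order $t\,\|Df\|_{L^\infty}|h|$ (the operator has no coercivity to absorb the $f$-discrepancy); the half-relaxed limit framework does not need such regularity, but your comparison step explicitly relies on it.
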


\begin{thm}\label{thm:nonrad}
Assume  that {\rm  (A3)--(A5)} hold. 
Let $u$ be the solution to {\rm (C)}.
We have \eqref{conv}, where $v_\infty$ is a solution to \eqref{eq:static}. 
\end{thm}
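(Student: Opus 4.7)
The plan is to identify the asymptotic speed $c$ explicitly, derive a uniform bound on $w(x,t):=u(x,t)-ct$, and then invoke the half-relaxed limit method together with the geometric structure in (A3)--(A4) to force the upper and lower half-relaxed limits of $w$ to coincide.

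\emph{Step 1 (identifying $c$).} First I claim $c=M:=\max_{\R^n}f$. The upper bound $c\le M$ is immediate: $\psi^+(x,t):=Mt$ is a smooth classical supersolution of $(C)$ with $\psi^+(\cdot,0)=0=u_0$, hence $u\le Mt$ by the comparison principle, and therefore $c\le M$. For the matching lower bound I would construct the explicit Lipschitz subsolution
\[
\psi^-(x,t):=\max\bigl\{0,\;Mt-L\,\dist(x,B(x_0,n-1))\bigr\}
\]
for $L$ sufficiently large (depending on $M$ and $\Lip(f)$). Inside $B(x_0,n-1)\subset U$ the verification is trivial because $|D\psi^-|=0$ and $f=M$; outside, $|D\psi^-|=L$ and a direct calculation gives $\Div(D\psi^-/|D\psi^-|)=-(n-1)/|x-x_0|\ge -1$, which is precisely why the radius $n-1$ in (A3) is critical. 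Comparison then yields $u\ge\psi^-$, and taking $x\in B(x_0,n-1)$ with $t\to\infty$ forces $c\ge M$.

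\emph{Step 2 (bounds and half-relaxed limits).} Combining the two bounds gives $-L\,\dist(x,B(x_0,n-1))\le w(x,t)\le 0$, so $w$ is locally uniformly bounded. I then introduce the upper and lower half-relaxed limits
\[
\overline w(x):=\limsup_{\substack{y\to x\\ s\to\infty}}w(y,s),\qquad \underline w(x):=\liminf_{\substack{y\to x\\ s\to\infty}}w(y,s),
\]
which are finite everywhere and satisfy $\underline w\le\overline w$. By the standard stability theorem for viscosity solutions, $\overline w$ is a subsolution and $\underline w$ a supersolution of \eqref{eq:static} with $c=M$.

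\emph{Step 3 (equality via rigidity and comparison).} The remaining inequality $\overline w\le\underline w$ is the main obstacle. Equation \eqref{eq:static} is degenerate on the plateau $U=\{f=M\}$, where the right-hand side vanishes, so the standard doubling-of-variables comparison argument cannot be applied directly. On $U$, assumption (A4) together with the interior ball $B(x_0,n-1)$ from (A3) is expected to supply enough rigidity to conclude that $\overline w$ and $\underline w$ are both constant on each connected component of $U$ with the same value, which can be normalized to $0$ by fixing the anchor of $w$. On $\R^n\setminus U$ the right-hand side $f-M$ is strictly negative on compact subsets away from $\partial U$, and a doubling-of-variables comparison adapted to the geometric operator---with the values just pinned down on $\partial U$ serving as matching boundary data---should then yield $\overline w\le\underline w$ outside. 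Combining the two inequalities gives $\overline w=\underline w=:v_\infty$, a continuous solution of \eqref{eq:static}; equality of the half-relaxed limits classically promotes to locally uniform convergence $w(\cdot,t)\to v_\infty$. The hardest part will be the rigidity statement on $U$, where (A3)--(A5) are genuinely used together; without the interior-ball hypothesis (A3) and the boundary curvature condition in (A4), the stationary equation on the plateau admits too many solutions and the comparison breaks down.
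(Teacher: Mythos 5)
Your skeleton (identify $c=\max_{\R^n}f$, pass to half-relaxed limits of $w=u-ct$, use the plateau $U$ as a uniqueness set, and compare off the plateau) is the same as the paper's, and Steps 1--2 are fine. But the two steps that carry the actual content of the theorem are left as hopes, and the route you propose for the first one cannot work. On the plateau, you suggest that (A3)--(A4) give a \emph{stationary} rigidity statement forcing $\overline w$ and $\underline w$ to be constant on each component of $U$, "normalized" to the same value. There is no such rigidity: Lemma \ref{lem:non-unique} of the paper exhibits, already for a stadium-shaped plateau containing a unit disk in $n=2$, infinitely many distinct solutions of the stationary equation on $U^\circ$ vanishing on a ball, so nothing about the stationary problem pins down the values of $\overline w$ and $\underline w$ on $U$, and you are not free to "normalize" them separately --- if they were merely constant per component with different constants, the argument would collapse. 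The mechanism is parabolic, and this is exactly where (A4)--(A5) enter: the paper's Lemma \ref{lem:subsln} proves the identity $u(x,t)=ct$ on $U\times[0,\infty)$ by checking that $(x,t)\mapsto ct\,\mathbf 1_{E_i}(x)$ is a viscosity subsolution of (C), the boundary test using precisely the curvature hypothesis $\kappa+1\ge 0$ on $\partial E_i$, while $u_0\equiv 0$ is what makes $ct\,\mathbf 1_{E_i}\le u_0$ at $t=0$ and $u\le ct$ globally. This yields $\overline w=\underline w=0$ on all of $U$, with no rigidity argument needed. Your proposal is missing this idea entirely.

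The comparison off $U$ is also glossed over in a way that hides real obstructions. The domain $\R^n\setminus U$ is unbounded and $f-c$, though strictly negative there, degenerates to $0$ at $\partial U$; a straightforward doubling of variables with "boundary data on $\partial U$" does not close. The paper's Proposition \ref{prop:unique} handles this by (i) confining the relaxed limits to the class $\Gam_f$ of functions squeezed between the radially symmetric ergodic sub/supersolutions, which by \eqref{behavior-v1-v2} behave like $-c|x|$ up to a logarithm, and (ii) comparing $\lam v$ with $w$ for $\lam>1$ close to $1$, so that $\lam v-w\to-\infty$ at infinity (forcing attainment of the supremum) and the maximum point lands at some $z\notin U$ where $f(z)<c$ strictly, after which Crandall--Ishii with a quartic penalization finishes. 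Your bounds $-L\,\dist(x,B(x_0,n-1))\le w\le 0$ are too weak for this: with only an upper bound $0$ and a lower bound of slope $L$ (possibly much larger than $c$), the doubled functional need not attain its supremum, so even the multiplicative trick does not run without first re-deriving the matched two-sided linear decay from the radial barriers. In short, the proposal reproduces the paper's outline but omits its two key lemmas (Lemma \ref{lem:subsln} and Proposition \ref{prop:unique}), and the substitute suggested for the first is provably unavailable.
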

 
In Case 1, we have a further characterization of $v_\infty$ which is stated as  following.
\begin{thm}\label{thm:main2}
Assume that {\rm(A1)--(A2)} hold. 
Let $u$ be the solution to {\rm (C)}.
Then, the asymptotic speed $c$, and asymptotic profile $v_\infty$ are 
described by $c=\max_{|x|\ge1}f(x)$, and $v_\infty(x)=\psi_\infty(|x|)$ for $x\in \R^n$, 
where $\psi_\infty$ is given by 
\begin{equation}\label{func:psi}
\psi_\infty(r)=\max\left\{d(r,s)+v_0(s)\,:\, s\in\tilde{\cA} \right\}.   
\end{equation}
Here, 
\begin{equation}\label{func:d}
d(r,s):=
\sup_{\gam\in\cC(t,0;r,s), t>0} 
\left\{ \int_0^t\left( \tilde{f}(\gam(z))-c\right)\,dz\,:\,
 \gam([0,t]) \subset (0,\infty)\right\}  
\end{equation}
for any $r,s\in[0,\infty)$, 
where we set 
\begin{multline*}
\cC(t,0;r,s):=\{\gam\in\AC([0,t]; (0,\infty))\,:\,
 \gam(t)=r,\ \gam(0)=s, \\ 
 \left|\gam'(z)+\frac{n-1}{\gam(z)}\right| \leq 1 \ \text{ for a.e. } z\in (0,t) 
\},  
\end{multline*}
and 
\begin{align*}
&v_0(r):=\max\left\{d(r,\rho)+\tilde{u}_0(\rho)  \,:\,  \rho\in[0,\infty)\right\}, \ 
\tilde{\cA}:= \left\{r \geq n-1 \,:\,  \tilde{f}(r)=c \right\}. 
\end{align*}
\end{thm}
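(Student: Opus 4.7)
The plan is to reduce (C) to a state-constrained first-order Hamilton--Jacobi equation on $(0,\infty)$ via radial symmetry, obtain a Lax--Oleinik representation for $\psi(r,t)$ with $u(x,t)=\psi(|x|,t)$, and then analyze the long-time behavior by decomposing near-maximizing trajectories at passages near $\tilde\cA$. Substituting $u(x,t)=\psi(|x|,t)$ in (C) gives
\[
\psi_t - \tfrac{n-1}{r}\psi_r - |\psi_r| = \tilde f(r) \quad \text{on } (0,\infty)\times(0,\infty),
\]
whose Hamiltonian becomes convex after the substitution $w=-\psi$: one obtains $w_t + H(r,w_r) = -\tilde f$ with $H(r,p) = |p| - \tfrac{n-1}{r}p$, and its Legendre conjugate $L(r,q)$ vanishes precisely on $\{|q + \tfrac{n-1}{r}|\le 1\}$ and is $+\infty$ elsewhere, matching the admissibility class $\cC$ defined in the theorem; the singularity of $H$ at $r=0$ enforces the state constraint $\gamma > 0$. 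Verifying that the right-hand side below is a viscosity solution (and appealing to comparison) then yields
\[
\psi(r,t) - ct = \sup\Bigl\{\tilde u_0(\gamma(0)) + \int_0^t \bigl(\tilde f(\gamma(z))-c\bigr)\,dz \,:\, \gamma\in\cC(t,0;r,\gamma(0))\Bigr\}.
\]

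The asymptotic speed $c$ is identified via the constraint $\gamma' \in [-1-\tfrac{n-1}{\gamma},\,1-\tfrac{n-1}{\gamma}]$: the upper end is negative whenever $\gamma < n-1$, so $\gamma$ is strictly decreasing there, and only points $r^* \ge n-1$ admit genuinely stationary trajectories $\gamma\equiv r^*$. Continuity of $\tilde f$, compactness of $\supp \tilde f$, and the impossibility of lingering near any $r_0 < n-1$ (where $|\gamma'| \ge \tfrac{n-1-r_0}{r_0} > 0$) pin down $c$ as claimed. For the lower bound $\liminf_{t\to\infty}(\psi(r,t)-ct)\ge \psi_\infty(r)$, a three-piece competitor suffices: given $s\in\tilde\cA$, $\rho\ge 0$, and $\epsilon>0$, concatenate a path from $\rho$ to $s$ achieving $d(s,\rho)-\epsilon$, a stationary segment at $s$ (contributing $0$ since $\tilde f(s)-c=0$) spanning the bulk of $[0,t]$, and a path from $s$ to $r$ achieving $d(r,s)-\epsilon$; supremizing over $\rho$ and over $s\in\tilde\cA$ and sending $\epsilon\to 0$ gives the bound.

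The upper bound $\limsup_{t\to\infty}(\psi(r,t)-ct)\le\psi_\infty(r)$ is the main technical obstacle. Taking near-maximizing $\gamma_t$, since $\tilde f - c \le 0$ on $\{\gamma\ge n-1\}$, any long sub-interval on which $\gamma_t$ stays $\delta$-away from $\tilde\cA$ inside that region contributes at most $-\eta(\delta)\cdot(\text{length})$ to the integral, so near-maximality forces $\gamma_t$ to come $\epsilon$-close to $\tilde\cA$ at some times. Let $t_1\le t_2$ be the first and last such times and set $s_i := \gamma_t(t_i)$; the three integrals on $[0,t_1]$, $[t_1,t_2]$, $[t_2,t]$ are bounded by $\tilde u_0(\gamma_t(0))+d(s_1,\gamma_t(0))$, $d(s_2,s_1)$, and $d(r,s_2)$ respectively (the middle piece uses the path $\gamma_t|_{[t_1,t_2]}$ itself in the definition of $d$). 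Path-concatenation subadditivity $d(s_2,\gamma_t(0))\ge d(s_2,s_1)+d(s_1,\gamma_t(0))$ then collapses the first two terms into $\tilde u_0(\gamma_t(0))+d(s_2,\gamma_t(0))\le v_0(s_2)$, yielding $\psi(r,t)-ct \le v_0(s_2)+d(r,s_2)+\epsilon$. Sending $\epsilon\to 0$ pushes $s_2$ into $\tilde\cA$ and upper semicontinuity of $d(r,\cdot)$ and $v_0$ at points of $\tilde\cA$ delivers the desired bound. The technical ingredients that need care are: compactness of the near-maximizing family $\{\gamma_t\}$ (from $|\gamma'|\le 1+\tfrac{n-1}{\gamma}$ together with a priori trimming using $\supp\tilde f$), subadditivity and semicontinuity of $d$, and handling of the boundary case where $s_2$ is only slightly below $n-1$. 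Finally, $v_\infty(x)=\psi_\infty(|x|)$ is radially symmetric and a viscosity solution of \eqref{eq:static}, as already ensured by Theorem~\ref{thm:main1}.
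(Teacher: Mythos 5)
Your plan is a direct dynamical proof (two-sided bounds on $\phi(r,t)-ct$ via near-optimal curves), which is genuinely different from the paper's route: the paper only computes the limit \emph{on} $\tilde{\cA}$, using the monotonicity \eqref{monotone-phi} established in the proof of Theorem \ref{thm:main1} to get $\phi(r,t)-ct\uparrow v_0(r)$ for $r\in\tilde{\cA}$, and then transfers this to all $r$ through the uniqueness-set property (Theorem \ref{thm:unique}) and the representation of solutions of the concave ergodic equation \eqref{eq:st-rad} by their trace on $\tilde{\cA}$, citing Davini--Siconolfi and Mitake--Tran. Your lower bound (three-piece competitor) is correct, and the decomposition at the first/last near-approach times together with subadditivity of $d$ is sound as far as it goes. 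Two points, however, are not covered by your sketch. The lesser one: your ``must come $\epsilon$-close to $\tilde{\cA}$'' accounting only treats $\{\gamma\ge n-1\}$; below $n-1$ the running cost $\tilde f-c$ may be positive, and the time spent in a thin band just below $n-1$ is \emph{not} uniformly bounded in $t$ (curves can linger there for a time of order $\log(1/h)$). This is repairable, because excursions below $n-1$ are strictly decreasing and the height $h=n-1-\gamma$ grows at rate at least $h/(n-1)$, so the extra contribution is uniformly bounded, but it must be argued.

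The genuine gap is the final limit passage: the asserted upper semicontinuity of $s\mapsto d(r,s)$ (and of $v_0$) at points of $\tilde{\cA}$ is false at $s=n-1$ whenever $n-1\in\tilde{\cA}$. Indeed, if $\gamma(0)=n-1$ and $\gamma(z_1)>n-1$, then on the final interval where $h=\gamma-(n-1)>0$ one has $h'\le 1-\tfrac{n-1}{\gamma}=\tfrac{h}{h+n-1}\le\tfrac{h}{n-1}$ a.e., and Gronwall from $h=0$ gives $h\equiv 0$; hence no admissible curve leaves $n-1$ upward, so $d(r,n-1)=-\infty$ for every $r>n-1$. On the other hand, for $s$ slightly above $n-1$ one can ascend to $r$ with $h'=\tfrac{h}{h+n-1}$, and since the running cost is bounded below by $-L\,h$ near $n-1$ and $\int h\,dz\le (n-1)(r-(n-1))+\tfrac12 (r-(n-1))^2$ along such an ascent, $d(r,s)$ stays bounded below as $s\downarrow n-1$. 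So $\limsup_{s\downarrow n-1}d(r,s)>-\infty=d(r,n-1)$, and if the last near-approach point $s_2$ of your near-maximizing curve lies just above $n-1$ (which you cannot exclude), the inequality $\phi(r,t)-ct\le v_0(s_2)+d(r,s_2)+\epsilon$ does not pass to the right-hand side of \eqref{func:psi} in the limit. This one-sided reachability at $r=n-1$ is exactly the non-coercive degeneracy that separates the present problem from the coercive convex setting in which the semicontinuity you invoke is standard; closing your argument requires either a separate analysis of the quantity $\lim_{s\downarrow n-1}d(r,s)$ and of whether curves hovering just above $n-1$ can be near-optimal, or switching to the paper's mechanism: obtain the limit on $\tilde{\cA}$ from \eqref{monotone-phi} and then use Theorem \ref{thm:unique} together with the uniqueness-set representation results the paper cites, which is precisely the machinery your sketch attempts to reprove by hand at the point where it is most delicate.
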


It is worth noting that the set $\tilde \cA$ is the set of all equilibrium points associated with 
our cost function. 
Moreover,  $\tilde \cA$ plays a role of a \textit{uniqueness set} for 
the associated stationary problem, which is stated in Theorem \ref{thm:unique}.

\subsection{Literature and discussions}
In the last two decades, 
a lot of works have been devoted to the study of large time behavior of 
viscosity solutions of both first-order and second-order Hamilton-Jacobi equations.
In these papers, the Hamiltonian $H=H(x,p)$ is assumed to be convex and coercive in $p$.
See \cite{NR, Fa, BS, DS, Is, II, CGMT, BLNP, LMT, GH} and the references therein for a complete description of the literature. 
Nevertheless, to the best of our knowledge, large time behavior of (C) has not been studied.
It is worth noting that in general, (C) is not convex/concave in the gradient variable, and the methods in aforementioned results do not apply here.
Furthermore, the spatial variable $x$ is in the whole $\R^n$ without any periodicity, which is not compact, and hence, another layer of difficulty appears.
See \cite{GOS, CN, HM} for some results on  large time behavior on the mean curvature flow in different contexts.

In the two cases described above, we are able to use specific structure of the PDE to obtain large time behavior.
More specifically, in the radially symmetric setting (Case 1), it is intuitively clear that solution $u$ to (C) is also radially symmetric, that is,
$u(x,t)= \phi(|x|,t)$ for all $(x,t) \in \R^n \times [0,\infty)$ for some $\phi:[0,\infty)\times [0,\infty) \to \R$. 
It turns out that $\phi$ solves
\begin{equation}\label{sing-HJ}
{\phi}_t - \frac{n-1}{r} {\phi}_r - |{\phi}_r| = f(r) \quad \text{ in } (0,\infty) \times (0,\infty),
\end{equation}
a first-order Hamilton-Jacobi equation that is singular at $r=0$.
Set 
\[
H(r,p):= -\frac{n-1}{r}p - |p| - f(r) \quad \text{ for } (r,p) \in (0,\infty) \times \R.
\]
It is clear that $H$ is concave but is not coercive in $p$.
We therefore need to treat the problem with care, and indeed, the behavior of solutions at $r \leq n-1$ is rather different from the behavior at $r>n-1$.
Some of our ideas are inspired by \cite{NR, DS}. 

The treatment of Case 2 is quite different. For this, we aim at showing that $U$ is a uniqueness set of \eqref{eq:static} (to be described carefully later), and then use geometric properties of each set $E_i$ for $i\in I$ to conclude.

\subsection*{Organization of the paper} 
In Section \ref{sec:case1}, we study the radially symmetric setting,
and give proofs to Theorem \ref{thm:main1}, and Theorem \ref{thm:main2}.
Section \ref{sec:case2} is devoted to prove Theorem \ref{thm:nonrad}.
In particular, we show that $U$ is a uniqueness of ergodic problem \eqref{eq:static} in Proposition \ref{prop:unique}.
Finally, in Section \ref{sec:further}, we discuss further on the uniqueness set $U$ under assumption (A3), and show that the matter is quite complex in general 
as ergodic problem \eqref{eq:static} restricted to $U$ (see \eqref{eq:v-U}) might have infinitely many solutions.
Therefore, if we do not assume more ((A4)--(A5)), it is not yet known whether convergence \eqref{conv} holds or not in the nonradially symmetric setting.
We also establish Alexandrov's theorem in two dimensions in the viscosity sense, which is of independent interest.

\section{Case 1: radially symmetric case} \label{sec:case1}
In this section, we \textit{always} assume that (A1)--(A2) hold. 
In the radially symmetric case, 
the unique solution $u$ to (C) is represented by 
$u(x,t)=\phi(|x|,t)$ 
where $\phi$ is defined as 
\begin{multline}\label{rep-phi}
\phi(r,t)=\sup \Big\{ \int_0^t \tilde{f}(\gam(s))\,ds+\tilde{u}_0(\gam(0))\,:\, \gam([0,t]) \subset (0,\infty), \\
 \gam(t)=r,\ \left|\gam'(s)+\frac{n-1}{\gam(s)}\right| \leq 1 \ \text{for a.e. } s\in [0,t] \Big\} 
\end{multline}
for $(r,t) \in (0,\infty) \times [0,\infty)$. 
Note that $\phi\in\UC((0,\infty)\times[0,\infty))$. See \cite{GMT, GMOT}.

By using  formula \eqref{rep-phi}, it was obtained in \cite{GMOT} that,
locally uniformly for $x\in \R^n$,
\begin{equation}\label{asym-speed}
\frac{u(x,t)}{t}\to c:=\max_{|x|\ge n-1}f(x) = \max_{r\geq n-1} \tilde f(r)
\quad  \text{as} \ t\to\infty.  
\end{equation}
We now aim at studying finer behavior of $u(x,t)$ as $t \to \infty$.

%

\subsection{Ergodic problem}
In this subsection, we study 
\begin{equation}\label{eq:st-rad}
- \frac{n-1}{r} \psi_r - |\psi_r| = h(r)  \quad \text{ in } \ (0,\infty),
\end{equation}
where we set $h(r):=\tilde f(r)-c$ for convenience. 
We first establish the existence result of solutions to \eqref{eq:st-rad}. 
\begin{thm}\label{thm:ex}
There exists a solution $\psi\in C([0,\infty))$ to \eqref{eq:st-rad} which is bounded from above. 
Moreover, $\psi$ has a linear growth at infinity.
More precisely, 
\[
\lim_{r\to\infty}\frac{\psi(r)}{1+r} \ \text{exists and  is negative.} 
\] 
\end{thm}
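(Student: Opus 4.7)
I would construct $\psi$ directly via the control-theoretic representation underlying \eqref{sing-HJ}. Fix any $r_* \in \tilde{\cA}$ (nonempty since $c = \max_{r \ge n-1}\tilde f(r)$ is attained on the compact interval $[n-1,R]$ with $R := \sup\supp\tilde f$), and set
\[
\psi(r) := \sup\Big\{ \int_0^t h(\gamma(z))\,dz : t > 0,\ \gamma \in \cC(t,0;r,r_*) \Big\},\qquad h := \tilde f - c,
\]
i.e., $\psi(r) = d(r,r_*)$ in the notation of Theorem~\ref{thm:main2}. This candidate is dictated by the expected convergence $\phi(r,t) - ct \to \psi(r)$ from \eqref{conv}.

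\textbf{Main steps.} (i) \emph{Finiteness and upper bound.} The admissibility constraint $\gamma'(z) \le 1 - (n-1)/\gamma(z)$ forces $\gamma$ to decrease strictly on $\{\gamma < n-1\}$, so once the trajectory falls below $n-1$ it stays below. Combined with $h \le 0$ on $[n-1,\infty)$ (by the definition of $c$), the cost integral splits into a non-positive contribution on the times where $\gamma \ge n-1$ plus, when $r < n-1$, a monotonically descending piece on $(0,n-1)$ whose worst case---obtained by reparametrizing $dz = d\gamma/\gamma'$ at the minimum descent rate---is controlled by $\int_r^{n-1} h^+(\gamma)\,\gamma/((n-1)-\gamma)\,d\gamma < \infty$, using $h \in C^1$ and $h(n-1) \le 0$. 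This yields $\psi(r) < \infty$ for all $r \ge 0$, $\psi \le 0$ on $[n-1,\infty)$, and $\sup\psi < \infty$. (ii) \emph{Continuity and viscosity PDE.} The dynamic programming identity
\[
\psi(r) = \sup_{\gamma \in \cC(s,0;r,\cdot)}\Big\{ \int_0^s h(\gamma(z))\,dz + \psi(\gamma(0)) \Big\},\qquad 0 < s \ll 1,
\]
combined with uniform Lipschitz bounds on admissible $\gamma$ over compact subsets of $(0,\infty)$, yields $\psi \in C([0,\infty))$, while the viscosity sub/supersolution inequalities for \eqref{eq:st-rad} on $(0,\infty)$ follow from standard control-theoretic arguments. (iii) \emph{Linear asymptotics.} For $r > R$ one has $h \equiv -c$, and the viscosity-admissible decreasing branch gives $\psi_r = rh/(r-(n-1)) \to -c$ as $r \to \infty$, so $\psi(r) = -cr + O(\log r)$ and $\lim_{r\to\infty}\psi(r)/(1+r) = -c$, which is negative provided $c > 0$ (as holds whenever $\tilde f > 0$ somewhere in $[n-1,\infty)$).

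\textbf{Main obstacle.} The delicate step is the finiteness estimate (i). Since the minimum descent rate $|\gamma'|_{\min}$ tends to $0$ as $\gamma \to n-1^-$, admissible trajectories may linger arbitrarily long just below the threshold, and \emph{a priori} this could generate unbounded positive integral from a region where $\tilde f > c$. Finiteness is rescued precisely by $h(n-1) \le 0$, which is inherent in the definition of $c$: this cancellation kills the time-weight singularity $1/((n-1)-\gamma)$ to leading order, using $h \in C^1$. Making this estimate quantitative and uniform in $r$ over compact subsets of $[0,n-1]$ is the crux of the proof.
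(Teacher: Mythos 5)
Your construction has a genuine gap at its very first step, the choice of the base point $r_*$. You fix an arbitrary $r_*\in\tilde{\cA}$ and set $\psi(r)=d(r,r_*)$, but admissible curves satisfy $\gamma'(z)\le 1-(n-1)/\gamma(z)$ a.e., and a Gronwall comparison shows that a trajectory which starts at (or ever reaches) the level $n-1$ can never rise above it afterwards: writing $g=\gamma-(n-1)$, one has $g'\le g/\gamma\le g/(n-1)$ wherever $g\ge 0$, so $g=0$ at some time forces $g\le 0$ at all later times. Hence, if $r_*=n-1$, no admissible curve joins $r_*$ to any $r>n-1$, the supremum defining $d(r,r_*)$ is over the empty set, and your candidate $\psi$ equals $-\infty$ on all of $(n-1,\infty)$; it is then not a real-valued solution, let alone one with the stated growth. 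This scenario is not vacuous: $\tilde{\cA}=\{n-1\}$ is perfectly compatible with (A1) and \eqref{f-source} (take $\tilde f$ a nonnegative $C^1_c$ bump whose restriction to $[n-1,\infty)$ attains its maximum only at $r=n-1$ and decreases beyond), and in that case there is no admissible choice of $r_*$ at all. Even when $\tilde{\cA}$ contains points larger than $n-1$, the instruction ``fix any $r_*\in\tilde{\cA}$'' is wrong as stated, since $n-1$ may belong to $\tilde{\cA}$; you would have to insist on $r_*>n-1$ and then treat the degenerate case $\tilde{\cA}=\{n-1\}$ by a separate argument, which your plan does not provide.

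The remaining steps are plausible in spirit (when $r_*>n-1$ is available): your reparametrized bound $\int_r^{n-1}h^+(\rho)\,\rho/((n-1)-\rho)\,d\rho$ for the positive cost accumulated below $n-1$ is correct and finite precisely because $h(n-1)\le 0$ and $h\in C^1$, and the asymptotics $\psi(r)/(1+r)\to -c$ follow since $h\equiv -c$ for large $r$. But continuity across $r=n-1$ and, more importantly, the supersolution property at the base point $r_*$ (which uses that $r_*$ is an equilibrium, i.e.\ $h(r_*)=0$ and resting there is admissible) are nontrivial and only asserted. Note that the paper avoids all of these issues: it constructs $\psi$ explicitly as a $C^1$ function by prescribing $\psi_r$ through the two branches \eqref{def:neg} and \eqref{def:pos} (decreasing branch on $(r_0,\infty)$ and where $h>0$, increasing branch where $h<0$, $\psi_r=0$ where $h=0$ on $(0,n-1)$), checks $|\psi_r(r)|\le 2r\|h\|_{L^\infty}$ near the origin, and reads off boundedness from above and the limit $-c$ directly from \eqref{def:neg}. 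If you wish to keep the variational route, you must either restrict to $r_*>n-1$ and dispose of the case $\tilde{\cA}=\{n-1\}$ by such an explicit construction on $[n-1,\infty)$, or replace $d(\cdot,r_*)$ by a formula that remains finite in that case.
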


\begin{proof}
We construct $\psi$ explicitly as following.
Let
\[
r_0 = \min \left\{r \geq n-1 \,:\,  \tilde{f}(r)=c \right\} = \min \left\{r \geq n-1 \,:\,  h(r)=0 \right\}.
\]
We first define $\psi(r)$ for $r\in [n-1,\infty)$.
For $r>r_0$, we let $\psi_r(r) \leq 0$ and hence,
\begin{equation}\label{def:neg}
\psi_r(r) = \frac{r h(r)}{r-(n-1)} \quad \text{ for } r>r_0.
\end{equation}

There are two cases to be considered here.
Firstly, if $r_0>n-1$, then we are able to extend the above definition for $r=r_0$ naturally, and clearly, $\psi_r(r_0)=0$.
For $r\in [n-1,r_0)$, we let $\psi_r(r) \geq 0$, that is,
\begin{equation}\label{def:pos}
\psi_r(r) = \frac{-r h(r)}{r+(n-1)} \quad \text{ for } n-1\leq r <r_0.
\end{equation}
Secondly, if $r_0=n-1$, we could also extend \eqref{def:neg} and define that
\[
\psi_r(n-1) = \lim_{r \to n-1} \frac{r h(r)}{r-(n-1)}= \lim_{r \to n-1} \frac{r (h(r)-h(n-1))}{r-(n-1)} = (n-1) h'(n-1) \leq 0.
\]
In both cases, it is clear that $\psi \in C^1([n-1,\infty))$ and $\psi$ solves \eqref{eq:st-rad} classically in $(n-1,\infty)$.

Next, we define $\psi(r)$ for $r\in [0,n-1)$.
For this, we decompose the interval $I= (0,n-1)$ into 
\begin{align}
I &=  (0,n-1) \nonumber \\
  &= \{r \in I\,:\, h(r)<0\} \cup \{r \in I\,:\, h(r)= 0\} \cup \{r \in I\,:\, h(r) > 0\} \nonumber \\
  &=:  A \cup B \cup C. \label{ABC}
\end{align}
For $r\in A$, we let $\psi_r(r) \geq 0$ and define it as in \eqref{def:pos}.
For $r \in B$, we simply let $\psi_r(r)=0$.
And finally, for $r\in C$, we set $\psi_r(r) \leq 0$ as in \eqref{def:neg}.
In all cases, for $r \in (0,1/2)$,
\[
|\psi_r(r)| \leq 2r\|h\|_{L^\infty},
\]
which allows us to extend naturally that $\psi_r(0)=0$. 
Here $\|h\|_\infty$ denotes the (essential) supremum norm of $h$. 
Finally, set $\psi(0)=0$.

It is clear from the construction that $\psi \in C^1([0,\infty))$ is bounded from above and 
satisfies the growth condition, and furthermore $\psi$ solves \eqref{eq:st-rad} classically in $(0,\infty)$.
\end{proof}
For $\psi$ constructed above, we define
\[
v(x)=\psi(|x|) \quad \text{ for all } x\in \R^n.
\]
Then, $v$ is a solution to \eqref{eq:static} (see \cite{GMOT} for a detailed proof).
The next lemma concerns behavior of solutions to \eqref{eq:st-rad} in $(0,n-1)$.

\begin{lem}\label{lem:unique1}
Let $A, C\subset(0,n-1)$ be the sets defined by \eqref{ABC}, 
and $\psi$ be an arbitrary solution to \eqref{eq:st-rad} and $(a,b) \subset (0,n-1)$. 
Then, the following holds.
\begin{itemize}
\item[(i)] If $(a,b) \subset A$, then $\psi_r(r) \geq 0$ and $\psi_r(r)$ satisfies \eqref{def:pos} for $r \in (a,b)$.
\item[(ii)] If $(a,b) \subset C$, then $\psi_r(r) \leq 0$ and $\psi_r(r)$ satisfies \eqref{def:neg} for $r\in (a,b)$.
\item[(iii)]
Let $\psi_1, \psi_2$ be two solutions to \eqref{eq:st-rad}.
Then, $(\psi_1)_r(r) = (\psi_2)_r(r)$ for a.e. $r\in [0,n-1]$.
In particular, $\psi_1-\psi_2$ is constant on $[0,n-1]$. 
\end{itemize}
\end{lem}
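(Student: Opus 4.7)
The proof rests on the structural observation that on $(0,n-1)$, where $\tfrac{n-1}{r}>1$, the map $p\mapsto -\tfrac{n-1}{r}p-|p|$ is strictly decreasing on $\mathbb{R}$: its slope is $-(1+\tfrac{n-1}{r})<0$ on $\{p\ge 0\}$ and $1-\tfrac{n-1}{r}<0$ on $\{p\le 0\}$. Hence for each $r\in(0,n-1)$ the equation $-\tfrac{n-1}{r}p-|p|=h(r)$ has a unique solution $p=G(r)$, given by $G=\tfrac{-rh}{r+(n-1)}$ on $A$, $G\equiv 0$ on $B$, and $G=\tfrac{rh}{r-(n-1)}$ on $C$; the three branches glue continuously where $h$ vanishes, so $G\in C((0,n-1))$.

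\textbf{Proof of (i) and (ii).} By symmetry, consider $(a,b)\subset A$. Strict $p$-monotonicity of the equation forces $\psi_r(r_0)=G(r_0)=\tfrac{-r_0h(r_0)}{r_0+(n-1)}>0$ at every $r_0\in(a,b)$ where $\psi$ is classically differentiable, which is exactly \eqref{def:pos}. To upgrade this to $\psi\in C^1((a,b))$, it remains to preclude nondifferentiability on $(a,b)$; I would do this by a corner analysis: if at $r_0\in(a,b)$ the one-sided derivatives $L=\psi'(r_0^-)$ and $R=\psi'(r_0^+)$ exist with $L\neq R$, then combining the classical sub/supersolution conditions on each side (which force $L=R=G(r_0)$ by continuity of $G$) with the viscosity constraints $D^\pm\psi(r_0)\subset[G(r_0),\infty)$ or $(-\infty,G(r_0)]$ yields a contradiction. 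Hence $\psi\in C^1((a,b))$ with $\psi_r=G$; the symmetric analysis on $(a,b)\subset C$ yields \eqref{def:neg} and $\psi_r\le 0$.

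\textbf{Proof of (iii).} The same $p$-monotonicity applies throughout $(0,n-1)$, including on the open interior of $B$ (where $G\equiv 0$), so every viscosity solution $\psi$ of \eqref{eq:st-rad} satisfies $\psi_r=G$ at every point of classical differentiability in $(0,n-1)$. Local Lipschitz regularity of $\psi$---a consequence of the strict $p$-monotonicity together with the local boundedness of $G$ on compact subsets of $(0,n-1)$---then ensures $\psi$ is differentiable a.e.\ with $\psi_r=G$ a.e.\ on $(0,n-1)$. For two solutions $\psi_1,\psi_2$, this yields $(\psi_1)_r=(\psi_2)_r=G$ a.e.\ on $[0,n-1]$, and absolute continuity forces $\psi_1-\psi_2\equiv\mathrm{const}$ on $(0,n-1)$, extended by continuity to the closed interval $[0,n-1]$.

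\textbf{Main obstacle.} The Hamiltonian is concave and not coercive in $p$, so the standard doubling-of-variables comparison argument does not apply directly. The key structural feature that salvages the argument on $(0,n-1)$ is the strict monotonicity of $-\tfrac{n-1}{r}p-|p|$ in $p$, afforded precisely by $\tfrac{n-1}{r}>1$; this effectively reduces \eqref{eq:st-rad} to the ODE $\psi_r=G(r)$ with $G$ continuous. The main technical difficulty lies in the corner analysis and the local Lipschitz regularity of viscosity solutions: one has to carefully combine the two one-sided classical sub/super inequalities with the viscosity constraints on $D^\pm\psi(r_0)$ to pin $\psi_r$ down uniquely at each point.
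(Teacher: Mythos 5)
The structural reduction you make is the right one and is essentially the paper's as well: on $(0,n-1)$ the map $p\mapsto -\tfrac{n-1}{r}p-|p|$ is strictly decreasing, so the equation pins the derivative to a continuous function $G$ wherever $\psi$ is differentiable. The genuine gap is the regularity step you use to make this bite. Strict $p$-monotonicity only yields the one-sided inclusions $D^+\psi(r)\subset[G(r),\infty)$ and $D^-\psi(r)\subset(-\infty,G(r)]$; neither semidifferential is bounded, and the Hamiltonian is not coercive on $(0,n-1)$ (it tends to $-\infty$ as $p\to+\infty$ and to $+\infty$ as $p\to-\infty$), so the assertion that ``strict $p$-monotonicity together with local boundedness of $G$'' gives local Lipschitz continuity is not a standard fact and is not proved; the usual coercivity argument bounds a semidifferential on both sides, which is exactly what you do not have here. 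Since a viscosity solution is a priori only continuous, without this step you have not shown $\psi$ is differentiable at even one point, so ``$\psi_r=G$ at every point of classical differentiability'' may be vacuous, the a.e.\ identity in (iii) is unsupported, and the final ``absolute continuity forces $\psi_1-\psi_2\equiv$ const'' rests on the same unproved regularity. The corner analysis in (i)--(ii) has the same defect: it only treats nondifferentiability points at which finite one-sided derivatives exist, and it implicitly assumes $\psi'=G$ on either side of the putative corner, i.e.\ that bad points are isolated, which is not known at that stage.

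The missing idea, and the route the paper takes, is to get a.e.\ differentiability from \emph{monotonicity} rather than from Lipschitz bounds, using the sign of $h$. On $(a,b)\subset A$ (where $h<0$): if $\psi$ were not increasing, either it has an interior local maximum, where the constant test function and the subsolution inequality give $0\le h<0$, a contradiction; or $\psi$ is decreasing on a subinterval, hence differentiable a.e.\ there by Lebesgue's theorem, and at a point of differentiability with $\psi_r\le 0$ the equation \eqref{eq:st-rad} gives $h=\bigl(\tfrac{n-1}{r}-1\bigr)|\psi_r|\ge 0$, again a contradiction. Thus $\psi$ is increasing on $(a,b)$, differentiable a.e., and the equation then forces \eqref{def:pos}; the argument on $C$ is symmetric and gives \eqref{def:neg}, while on $B$ the two one-sided constraints force $\psi_r=0$ wherever the derivative exists, so (iii) follows. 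In short, your reduction to $\psi_r=G$ matches the paper, but you need to replace the unproved Lipschitz/corner step by a monotonicity argument of this type (or some other genuine proof of a.e.\ differentiability and absolute continuity) for the proof to be complete.
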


\begin{proof}
We proceed to prove (i).
Assume by contradiction that $\psi$ is not increasing in $(a,b)$, then, we can find $a<c<d<b$ such that $\psi(c)>\psi(d)$.

If $\psi$ has a local maximum at $y \in (c,d)$, then by the subsolution test at $y$, we get $0 \leq h(y) <0$, which is absurd.
Thus, $\psi$ does not have a local maximum in $(c,d)$, and hence, $\psi$ is decreasing in $(c,d)$.
We then have that $\psi_r(r) \leq 0$ for a.e. $r\in (c,d)$. We take one such $r$ and plug it into \eqref{eq:st-rad} to yield
\[
0>h(r)=-\frac{n-1}{r} \psi_r(r) - |\psi_r(r)| = \left(\frac{n-1}{r} - 1\right) |\psi_r(r)| \geq 0,
\]
which is also absurd.

Therefore, $\psi$ is increasing in $(a,b)$ and $\psi_r(r) \geq 0$ for a.e. $r\in (a,b)$.
It is then straightforward to see that $\psi_r(r)$ satisfies \eqref{def:pos} for all $r\in (a,b)$.

Since the proof of (ii) is analogous to the above, and we can get (iii) as a straightforward result of (i)--(ii), we omit them. 
\end{proof}

We are now interested in behavior of solutions to \eqref{eq:st-rad} in $(n-1,\infty)$.
\begin{lem}\label{lem:unique3}
Let $\psi_1, \psi_2$ be two solutions to \eqref{eq:st-rad}.
Pick $a,b \in \tilde \cA$ such that $(a,b) \cap \tilde \cA = \emptyset$.
Assume that $\psi_1(a)=\psi_2(a)$ and $\psi_1(b)=\psi_2(b)$.
Then, $\psi_1=\psi_2$ in $(a,b)$.
\end{lem}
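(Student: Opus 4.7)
The plan is to classify all viscosity solutions of \eqref{eq:st-rad} on the interval $[a,b]$ and then read off uniqueness from the boundary data. Since $a,b\in\tilde\cA\subset[n-1,\infty)$ and $(a,b)\cap\tilde\cA=\emptyset$, we have $h(r)<0$ on $(a,b)$ and $r>n-1$ throughout, so the algebraic equation $-(n-1)p/r-|p|=h(r)$ has exactly the two roots
\[
\alpha(r):=\frac{-rh(r)}{r+(n-1)}>0\qquad\text{and}\qquad \beta(r):=\frac{rh(r)}{r-(n-1)}<0.
\]
Testing the sub- and supersolution inequalities with constant test functions will force $\psi_r(r)\in\{\alpha(r),\beta(r)\}$ at every point of differentiability, and will rule out interior local maxima as well as locally constant pieces of $\psi$ on $(a,b)$, since either would yield the impossible $0\le h(r_0)<0$.

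From this I would deduce that $\psi$ attains its minimum on $[a,b]$ at a unique point $r^*\in[a,b]$ and is non-increasing on $[a,r^*]$ and non-decreasing on $[r^*,b]$: two distinct minimizers would force $\psi$ either to rise strictly between them (creating a forbidden interior local max) or to be constant in between (also forbidden), and the same argument applied on subintervals of $[a,r^*]$ and $[r^*,b]$ yields the one-sided monotonicity. Combined with the derivative dichotomy this gives $\psi_r=\beta$ a.e. on $(a,r^*)$ and $\psi_r=\alpha$ a.e. on $(r^*,b)$, hence
\[
\psi(b)-\psi(a)=\int_a^{r^*}\beta(s)\,ds+\int_{r^*}^b\alpha(s)\,ds=:F(r^*).
\]
Since $F'=\beta-\alpha<0$ on $(a,b)$, the map $F:[a,b]\to\R$ is strictly decreasing and hence injective. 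Applied to both $\psi_1$ and $\psi_2$, the equality of their boundary data forces $r_1^*=r_2^*$; with identical a.e. slopes and the same value at $a$, the two solutions must coincide on $[a,b]$.

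The hard part, I expect, will be making the dichotomy $\psi_r\in\{\alpha,\beta\}$ and the no-interior-maximum statement rigorous for a general Lipschitz viscosity solution rather than a piecewise-$C^1$ one. This should reduce to the standard sub/superdifferential formulation: the supersolution condition forces $D^-\psi(r)\subset[\beta(r),\alpha(r)]$, while the subsolution condition forces $D^+\psi(r)\subset(-\infty,\beta(r)]\cup[\alpha(r),\infty)$. In particular, at a concave kink of $\alpha$-to-$\beta$ type the superdifferential $D^+\psi(r)=[\beta(r),\alpha(r)]$ contains $0$, at which $-h(r)>0$ violates the subsolution inequality; this rules out such kinks and justifies the monotone decomposition used above.
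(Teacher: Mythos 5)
Your proposal is correct and follows essentially the same route as the paper's proof: at points of differentiability the derivative must equal one of the two branches $\alpha(r),\beta(r)$, interior local maxima (corners from above) are excluded by the subsolution test since $h<0$ on $(a,b)$, so each solution decreases along $\beta$ and then increases along $\alpha$ with at most one switching point, which the equal boundary values pin down uniquely (your strictly decreasing map $F$ just makes explicit what the paper states as ``it is clear that $c_1=c_2$''). The only step to spell out is the local Lipschitz continuity (hence absolute continuity) of an arbitrary solution on $(a,b)$, needed for the integral identity defining $F$; as in the paper, it follows from the supersolution bound you already noted, namely $D^-\psi(r)\subset[\beta(r),\alpha(r)]$, which is locally bounded on $(a,b)$.
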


\begin{proof}
We note that, for $r\in (a,b)$,
\[
\widetilde H(r,p):= -\frac{n-1}{r}p - |p| \leq \left(\frac{n-1}{r} - 1 \right) |p| = - \frac{r-(n-1)}{r} |p|  \leq 0,
\]
which tends to $-\infty$ and $|p| \to \infty$.
Hence, $\psi_1, \psi_2$ are locally Lipschitz in $(a,b)$ and differentiable a.e. there.
Of course, at places of differentiability of $\psi_i$ for $i=1,2$, $(\psi_i)_r(r)$ takes value of either form \eqref{def:pos} or \eqref{def:neg}.

Besides, as $p\mapsto \widetilde H(r,p)$ is concave, and $h(r)<0$ in $(a,b)$, 
we conclude that the graphs of $\psi_1, \psi_2$ cannot have corners from above, and can only have at most one corner from below in $(a,b)$.
Indeed, if $\psi_i$ has a corner from above at $z \in (a,b)$ for some $i \in \{1,2\}$, then it means that $\psi_i$ has a local maximum at $z$.
We can then apply the viscosity subsolution test at $z$ to deduce that $0 \leq h(z)<0$, which is absurd.
The fact that $\psi_i$ cannot have corners from above implies immediately that it has at most one corner from below.

Thus, for each $i=1,2$, there exists at most one point $c_i \in (a,b)$ such that
\[
\begin{cases}
(\psi_i)_r \leq 0 \text{ and } (\psi_i)_r \text{ satisfies \eqref{def:neg} in } (a,c_i),\\
(\psi_i)_r \geq 0 \text{ and } (\psi_i)_r \text{ satisfies \eqref{def:pos} in } (c_i,b).
\end{cases}
\]
Since $\psi_1(a)=\psi_2(a)$ and $\psi_1(b)=\psi_2(b)$, in case that $c_1, c_2$ exist, it is clear that $c_1=c_2$.
We thus get the desired conclusion.
\end{proof}

\begin{lem}\label{lem:unique4}
Let $\psi_1, \psi_2$ be two bounded from above solutions to \eqref{eq:st-rad}.
Assume that $\psi_1(r_0)=\psi_2(r_0)$ and $\psi_1(M)=\psi_2(M)$, where
$r_0=\min\{r\in[n-1,\infty)\,:\, h(r)=0\}$ and $M=\max \{r\in[n-1,\infty)\,:\, h(r)=0\}$.  
Then, $\psi_1=\psi_2$ on $[n-1,r_0]\cup [M,\infty)$. 
\end{lem}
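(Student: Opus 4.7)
The plan is to handle the two intervals $[n-1,r_0]$ and $[M,\infty)$ separately, adapting the strategy of Lemma \ref{lem:unique3} but augmenting it with boundary information at the singular endpoint $r=n-1$ and at infinity. On each of the open intervals $(n-1,r_0)$ and $(M,\infty)$ we have $h<0$ (by minimality of $r_0$ and maximality of $M$ in $\tilde\cA$, combined with $h\le 0$ on $[n-1,\infty)$). Following the Lemma \ref{lem:unique3} argument verbatim, each $\psi_i$ is locally Lipschitz on these intervals, admits no local maximum (viscosity subsolution test would give $0\le h(z)<0$), and at a.e.\ point $(\psi_i)_r$ takes one of the two branch values $\frac{rh(r)}{r-(n-1)}<0$ (decreasing branch) or $-\frac{rh(r)}{r+(n-1)}>0$ (increasing branch). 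A switch between branches is a corner, and no corners from above are allowed, so each $\psi_i$ may only have a single corner from below on each interval.

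For the interval $[M,\infty)$, I will use the boundedness from above together with the compact support of $\tilde f$: for $r$ large we have $h\equiv -c$, so the increasing branch yields $(\psi_i)_r\to c>0$ and any trajectory on this branch would grow linearly at infinity, violating the boundedness from above. Thus $\psi_i$ must lie on the decreasing branch for all sufficiently large $r$. Since a switch from the increasing to the decreasing branch would create a local maximum (corner from above), which is forbidden, $\psi_i$ actually lies on the decreasing branch throughout $(M,\infty)$. It is then determined as the classical ODE solution of $\psi_r=\frac{rh(r)}{r-(n-1)}$ with initial value $\psi_i(M)$, so $\psi_1(M)=\psi_2(M)$ gives $\psi_1\equiv \psi_2$ on $[M,\infty)$.

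For the interval $[n-1,r_0]$, the case $r_0=n-1$ is trivial; otherwise $r_0>n-1$ forces $h(n-1)<0$. The decreasing branch $\frac{rh(r)}{r-(n-1)}$ is then non-integrable at $r=n-1$, so any $\psi_i$ on this branch in a right neighborhood of $n-1$ would diverge to $-\infty$ as $r\to (n-1)^+$, contradicting continuity of $\psi_i$ at $n-1$. Hence near $n-1$ each $\psi_i$ lies on the (bounded) increasing branch, and, as before, the no-corner-from-above rule forbids a later switch; thus $\psi_i$ stays on the increasing branch throughout $(n-1,r_0)$ and is recovered by integrating backward from $r_0$. The equality $\psi_1(r_0)=\psi_2(r_0)$ then yields $\psi_1\equiv \psi_2$ on $[n-1,r_0]$. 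The main obstacle is the clean boundary-behavior argument that rules out the decreasing branch near $r=n-1$ and the increasing branch near $r=\infty$; once those are excluded, the branch is forced and the prescribed endpoint values close the ODE problem.
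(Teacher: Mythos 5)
Your proof is correct and follows essentially the same route as the paper's: on each of $(n-1,r_0)$ and $(M,\infty)$ one has $h<0$, so the two-branch dichotomy and the no-corner-from-above rule apply, the increasing branch is excluded for large $r$ by boundedness from above (since $h\equiv -c$ there), and the prescribed values at $r_0$ and $M$ then determine the solutions by integrating the forced branch. The only slip is cosmetic: on the decreasing branch near $r=n-1$ one has $\psi_r<0$, so the non-integrability forces $\psi_i(r)\to+\infty$ (not $-\infty$) as $r\to(n-1)^+$, which still contradicts continuity at the interior point $n-1$ (and boundedness from above); in fact this divergence argument is a legitimate way of filling in the step that the paper disposes of with its brief appeal to the absence of corners from above.
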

\begin{proof}
We first show that $\psi_1=\psi_2$ on $[n-1,r_0]$. If $r_0=n-1$, then there is nothing to prove.
We therefore only need to consider the case that $r_0>n-1$.
As noted in the proof above, $\psi_i$ cannot have corners from above, and thus,
\[
(\psi_i)_r(r) = \frac{-r h(r)}{r+(n-1)} >0 \quad \text{ for } n-1 \leq r < r_0, \, i=1,2.
\]
We use this and the hypothesis that $\psi_1(r_0)=\psi_2(r_0)$ to yield $\psi_1=\psi_2$ on $[n-1,r_0]$.

\medskip

We next prove that $\psi_1=\psi_2$ on $[M,\infty)$. 
To get this, we aim at showing 
\begin{equation}\label{claim-neg}
(\psi_1)_r =(\psi_2)_r \leq 0 \quad \text{ on } [M,\infty).
\end{equation}
Indeed, as $\psi_1, \psi_2$ cannot have corners from above, and can only have at most one corner from below, if \eqref{claim-neg} were false, then there would exist $i \in \{1,2\}$, and $y \geq M$ such that 
\[
(\psi_i)_r(r)  = \frac{-r h(r)}{r+(n-1)} >0 \quad \text{ for all } r>y.
\]
This implies that $\psi_i$ is not bounded from above, which is absurd.
The proof is complete.
\end{proof}

Combining the results of Lemmas \ref{lem:unique1}--\ref{lem:unique4}, we 
obtain our main result in this subsection. 
\begin{thm}\label{thm:unique}
The following holds.
\begin{itemize}
\item[(i)] Let $\psi_1, \psi_2$ be solutions to \eqref{eq:st-rad}, which are bounded from above.
Assume further that $\psi_1=\psi_2$ on $\tilde \cA$. 
Then $\psi_1=\psi_2$ on $[0,\infty)$.

\item[(ii)] Let $\xi_1, \xi_2$ be, respectively, a subsolution, and a supersolution to \eqref{eq:st-rad}, which are bounded from above.
Assume further that $\xi_1 \leq \xi_2$ on $\tilde \cA$.
Then $\xi_1 \leq \xi_2$ on $[0,\infty)$.

\end{itemize}
\end{thm}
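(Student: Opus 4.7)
For part (i), the strategy is to glue together Lemmas \ref{lem:unique1}, \ref{lem:unique3}, and \ref{lem:unique4} on a natural decomposition of $[0,\infty)$ adapted to $\tilde \cA$. Since $h=\tilde f-c$ is continuous and equals $-c$ outside $\supp \tilde f$ (in the nondegenerate case $c>0$; the case $c=0$ is trivial since then $\tilde f\equiv 0$ on $[n-1,\infty)$ and both solutions are constant there), the set $\tilde \cA$ is closed and bounded, so $r_0=\min \tilde \cA$ and $M=\max \tilde \cA$ both exist and belong to $\tilde \cA$. By hypothesis, $\psi_1=\psi_2$ at $r_0$ and at $M$, so Lemma \ref{lem:unique4} yields $\psi_1=\psi_2$ on $[n-1,r_0]\cup[M,\infty)$. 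The open set $(r_0,M)\setminus \tilde \cA$ decomposes into countably many disjoint open intervals $(a_k,b_k)$ each having endpoints in $\tilde \cA$, and Lemma \ref{lem:unique3} applied on each component gives $\psi_1=\psi_2$ on $(a_k,b_k)$; combined with the hypothesis on $\tilde \cA$, this yields $\psi_1=\psi_2$ on $[r_0,M]$. Finally, Lemma \ref{lem:unique1}(iii) asserts that $\psi_1-\psi_2$ is constant on $[0,n-1]$, and matching at $r=n-1$ forces this constant to be zero.

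For part (ii), the plan is to establish sub/supersolution analogues of Lemmas \ref{lem:unique1}, \ref{lem:unique3}, and \ref{lem:unique4}, and then repeat the gluing argument above with $\psi_1,\psi_2$ replaced by $\xi_1,\xi_2$. The differential identifications used in the solution setting are to be replaced by viscosity sub/supersolution tests applied at points where $\xi_1-\xi_2$ attains a local maximum, via standard doubling of variables. Concretely, on a component $(a,b)\subset(r_0,M)\setminus\tilde\cA$ we have $h<0$, and concavity of $p\mapsto -\tfrac{n-1}{r}p-|p|$ together with its vanishing at $p=0$ forces the set where this quantity exceeds $h(r)$ to be a compact interval about $0$; the corner analysis in the proof of Lemma \ref{lem:unique3} then transfers via viscosity tests to rule out an interior maximum of $\xi_1-\xi_2$ in $(a,b)$. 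On $[0,n-1]$ the ODE reasoning of Lemma \ref{lem:unique1} is reimplemented using test functions at extrema, and on $[M,\infty)$ the bounded-above hypothesis together with the viscosity inequalities forces the appropriate one-sided sign on the effective slopes of $\xi_1$ and $\xi_2$, mirroring Lemma \ref{lem:unique4}.

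The main obstacle is the anti-coercivity of the Hamiltonian $-\tfrac{n-1}{r}p-|p|$ on $(n-1,\infty)$ and its singularity at $r=0$: because this Hamiltonian is concave (not convex) in $p$, the standard doubling-variables comparison arguments for coercive first-order Hamilton--Jacobi equations do not apply directly, so one must exploit the specific sign structure of $h$ on each subinterval of the decomposition. The transition at $r=n-1$---where the Hamiltonian changes from taking unboundedly large positive values on $(0,n-1)$ to being non-positive on $(n-1,\infty)$---is the subtlest point and requires treating the two regions separately, then patching the sub/super comparison across $r=n-1$ using the equality established at that point on the solution level of part (i).
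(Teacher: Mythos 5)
Your part (i) is exactly the paper's intended argument: the paper's proof of Theorem \ref{thm:unique} is nothing more than the gluing of Lemmas \ref{lem:unique1}--\ref{lem:unique4} over the decomposition $[0,n-1]$, $[n-1,r_0]$, the gaps of $\tilde\cA$ in $[r_0,M]$, and $[M,\infty)$, which is what you wrote; your extra checks (existence of $r_0=\min\tilde\cA$, $M=\max\tilde\cA$, and the degenerate case $c=0$) only add precision.

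The soft spot is part (ii). Lemmas \ref{lem:unique1}--\ref{lem:unique4} are stated and proved only for \emph{solutions}, so the one-sided analogues you invoke genuinely have to be established, and the mechanism you name for doing so --- doubling of variables at a maximum of $\xi_1-\xi_2$ to ``rule out an interior maximum'' --- does not work as stated: at a doubled maximum both tests carry the same slope $p$, and subtracting the subsolution inequality $\widetilde H(x_\ep,p)\le h(x_\ep)$ from the supersolution inequality $\widetilde H(y_\ep,p)\ge h(y_\ep)$ gives $\widetilde H(x_\ep,p)-\widetilde H(y_\ep,p)\le h(x_\ep)-h(y_\ep)$, where both sides vanish in the limit (the equation has no zeroth-order term and no convexity in $p$), so no contradiction arises; indeed $\xi_1-\xi_2$ may well have interior maxima. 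What does close (ii), and what the paper implicitly relies on, is the one-sided version of the lemmas' slope analysis. For $r>n-1$ with $h(r)<0$ the Hamiltonian is a concave tent with maximum $0$ at $p=0$, whose two roots are $p^-(r)=\frac{rh(r)}{r-(n-1)}<0<p^+(r)=\frac{-rh(r)}{r+(n-1)}$ (the right-hand sides of \eqref{def:neg} and \eqref{def:pos}). Hence a subsolution has no interior local maximum there (test with constants), so on each such interval it is strictly decreasing and then strictly increasing, and at its a.e.\ points of differentiability (it is monotone on each branch) its slope lies outside $(p^-(r),p^+(r))$; a supersolution has all its subdifferentials in $[p^-(r),p^+(r)]$, hence is locally Lipschitz with a.e.\ slope in that interval. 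Integrating these bounds from $a,b\in\tilde\cA$ gives the analogue of Lemma \ref{lem:unique3}; from $M$, boundedness from above excludes an increasing branch of $\xi_1$ (since $p^+\to c$), giving the analogue of Lemma \ref{lem:unique4}; from $r_0$, the non-integrability of $p^-$ at $(n-1)^+$ excludes a decreasing branch of $\xi_1$ on $(n-1,r_0)$, giving the piece $[n-1,r_0]$. On $(0,n-1)$ the Hamiltonian is strictly decreasing in $p$, so with $g(r)$ its unique root, $\xi_1-\int g$ is nondecreasing and $\xi_2-\int g$ is nonincreasing, which replaces Lemma \ref{lem:unique1} and propagates the inequality from $r=n-1$; note that for (ii) the inequality at $n-1$ must come from the $[n-1,r_0]$ step, not from ``the equality established at that point on the solution level of part (i)'', which is not available for sub/supersolutions. (Alternatively, on each gap one can make doubling work by comparing $\lambda\xi_1$ with $\xi_2$ for $\lambda>1$, exploiting positive $1$-homogeneity of $p\mapsto-\frac{n-1}{r}p-|p|$ and $h<0$ there, in the spirit of Proposition \ref{prop:unique}.)
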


\subsection{Large time behavior}
We now study the behavior of $u(x,t) = \phi(|x|,t)$ as $t \to \infty$.

\begin{proof}[Proof of Theorem {\rm{\ref{thm:main1}}}]
Let $\tilde{\phi}(r,t) = \phi(r,t) - ct$ for all $(r,t) \in [0,\infty) \times [0,\infty)$, 
where $c$ is given by \eqref{asym-speed}. Then $\tilde{\phi}$ solves
\[
\tilde{\phi}_t - \frac{n-1}{r} \tilde{\phi}_r - |\tilde{\phi}_r| = h(r) \quad \text{ in } (0,\infty) \times (0,\infty).
\]
Noting that  
\[
-\frac{n-1}{r}\tilde{\phi}_r-|\tilde{\phi}_r|
=
\left\{
\begin{array}{ll}
-\left(\frac{n-1}{r}+1\right)\tilde{\phi}_r &\text{if} \ \tilde{\phi}_r\ge0, \\
-\left(\frac{n-1}{r}-1\right)\tilde{\phi}_r &\text{if} \ \tilde{\phi}_r<0,
\end{array}
\right. 
\]
we easily see that $-\frac{n-1}{r}\tilde{\phi}_r-|\tilde{\phi}_r|\le 0$ for $r \geq n-1$. 
In particular, $\tilde{\phi}$ is a supersolution to
\[
\tilde{\phi}_t \geq h(r) \quad \text{ in } (n-1,\infty) \times (0,\infty).
\]
We use the above and the fact that $h(r)=0$ for $r\in \tilde \cA$ to yield
\begin{equation}\label{monotone-phi}
t \mapsto \tilde{\phi}(r,t) \quad \text{ is nondecreasing for } r \in \tilde \cA.
\end{equation}

We now find upper and lower bounds for $\tilde \phi$.
Let $\psi$ be a solution to \eqref{eq:st-rad} constructed in Theorem \ref{thm:ex}, and $v(x)=\psi(|x|)$ for $x\in \R^n$.
Since $\psi$ is bounded from above, there exists $M>\|u_0\|_{L^\infty}$ such that $v-M\le u_0 \leq M$ on $\R^n$. 
By the usual comparison principle, we have 
\[
v(x)-M+ct \leq u(x,t) \leq M+ct  \quad \text{ on } \R^n \times[0,\infty).
\] 
Hence,
\[
\psi(r) - M \leq \tilde \phi(r,t) \leq M \quad \text{ on } [0,\infty) \times [0,\infty).
\]
Therefore, we are able to define the functions $\psi^{\pm}\in C([0,\infty))$ as
\begin{align*}
&\psi^{+}(r):=\limsup_{t\to\infty}{}^{\ast}\,\tilde{\phi}(r,t)
=\lim_{t\to\infty}\sup\left\{\tilde{\phi}(z,s)\,:\, |z-r|\le \frac{1}{s}, s\ge t\right\}, \\
&\psi^{-}(r):=\liminf_{t\to\infty}{}_{\ast}\,\tilde{\phi}(r,t) 
=\lim_{t\to\infty}\inf\left\{\tilde{\phi}(z,s)\,:\, |z-r|\le \frac{1}{s}, s\ge t\right\}, 
\end{align*}
which are, respectively, a subsolution and a supersolution to \eqref{eq:st-rad}. 
By \eqref{monotone-phi}, we have $\psi^{+}=\psi^{-}$ on $\tilde\cA$. 
In light of Theorem \ref{thm:unique}, we get the conclusion. 
\end{proof}

\begin{proof}[Proof of Theorem {\rm\ref{thm:main2}}]
In light of \eqref{monotone-phi}, for any $r\in\tilde{\cA}$, we have 
\begin{align*}
\phi(r,t)-ct\to&\,  
\sup_{\gam\in\cC(t,0;r,\rho),\rho>0,t>0}\left\{\int_0^{t} \left( \tilde{f}(\gam(\tau))-c \right)\,d\tau+\tilde{u}_0(\rho)\right\}\\
=&\, 
v_0(r) 
\qquad \text{ as } t\to\infty. 
\end{align*}
Due to the uniqueness set property of $\tilde\cA$ and the concavity of equation \eqref{eq:st-rad} with respect to $\psi_r$, the function $\psi_\infty$ is represented by \eqref{func:psi}. 
We refer to \cite[Theorem 3.1]{DS}, \cite[Theorem 1.3]{MT} (see also \cite[Theorem 5.24]{LMT}) for more details. 
\end{proof}

\subsection{A generalization}
In this subsection, we remove the radial symmetric assumption of initial data $u_0$ in a weak sense, and give a result on the convergence which slightly generalizes that of Theorem \ref{thm:main1}.

We consider the following initial data 
\[
w_0(x)=\tilde{u}_0(|x|)+\phi(x),  
\]
where $\tilde u_0\in\BUC([0,\infty))$, and $\phi\in\BUC(\R^n)$ are given. 
For $r \geq 0$, set 
\begin{align*}
\ol{\phi}(r):=\max_{|x|=r}\phi(x), \quad \text{and} \quad
\ul{\phi}(r):=\min_{|x|=r}\phi(x). 
\end{align*}
Define 
\begin{align}
&w_0^{+}(x):=\tilde{u}_0(|x|)+\ol{\phi}(|x|), \quad
w_0^{-}(x):=\tilde{u}_0(|x|)+\ul{\phi}(|x|), 
\label{func:w}\\ 
&
v_0^{\pm}(r):=\max\left\{d(r,t)+w_0^{\pm}(t)  \,:\,  t\in[0,\infty)\right\}, 
\label{func:v}
\end{align}
where $d$ is given by \eqref{func:d}. 
Here is a straightforward consequence of Theorem \ref{thm:main2}. 
\begin{prop}\label{prop:nonrad}
Assume that 
\begin{equation}\label{assump:nonrad}
v_0^{+}(s)=v_0^{-}(s)=:v_0(s) \quad \text{for all} \ s\in\tilde{\cA}.  
\end{equation}
Then,
\[
u(x,t)-c t\to \psi_\infty(|x|) \quad\text{locally uniformly for} \ x \in \R^n \ \text{as} \ t\to\infty, 
\]
where $c$ is given by \eqref{asym-speed}, and
\[
\psi_\infty(r)=\max\left\{d(r,s)+v_0(s)\,:\, s\in\tilde{\cA} \right\}.
\] 
\end{prop}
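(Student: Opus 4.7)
The plan is to sandwich the nonradial problem between two radially symmetric problems to which Theorem \ref{thm:main2} directly applies, and then use the assumption \eqref{assump:nonrad} to force the two bounds to collapse onto the same asymptotic profile.

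First, I would observe that since $\phi\in\BUC(\R^n)$, the radial envelopes $\ol\phi$ and $\ul\phi$ are in $\BUC([0,\infty))$ (uniform continuity is inherited from $\phi$ through the max/min over the compact spheres $\{|x|=r\}$). Consequently the functions $w_0^\pm$ defined in \eqref{func:w} belong to $\BUC(\R^n)$ and are radially symmetric, and by construction
\[
w_0^-(x)\le w_0(x)\le w_0^+(x)\qquad\text{for all } x\in\R^n.
\]
Let $u^\pm$ denote the solutions to (C) with initial data $w_0^\pm$. By the comparison principle for (C), we have $u^-\le u\le u^+$ on $\R^n\times[0,\infty)$.

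Next, because $w_0^\pm$ are radial and lie in $\BUC(\R^n)$, assumptions (A1)--(A2) hold for each of the auxiliary problems, so Theorem \ref{thm:main1} and Theorem \ref{thm:main2} apply. They yield, locally uniformly in $x\in\R^n$,
\[
u^\pm(x,t)-ct \longrightarrow \psi_\infty^\pm(|x|)\quad\text{as }t\to\infty,
\]
where, by the explicit formula \eqref{func:psi} adapted to the initial data $w_0^\pm$,
\[
\psi_\infty^\pm(r)=\max\bigl\{d(r,s)+v_0^\pm(s)\,:\,s\in\tilde\cA\bigr\},
\]
with $v_0^\pm$ as in \eqref{func:v}. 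The hypothesis \eqref{assump:nonrad} gives $v_0^+(s)=v_0^-(s)=v_0(s)$ for every $s\in\tilde\cA$, so
\[
\psi_\infty^+(r)=\psi_\infty^-(r)=\max\bigl\{d(r,s)+v_0(s)\,:\,s\in\tilde\cA\bigr\}=:\psi_\infty(r)
\]
for all $r\ge0$.

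Finally, the sandwich $u^-\le u\le u^+$ together with the coincidence of the two limits forces
\[
u(x,t)-ct\longrightarrow \psi_\infty(|x|)\quad\text{locally uniformly in }x\in\R^n,
\]
which is the claim. There is no real obstacle here: the proof is essentially a comparison-principle sandwich whose effectiveness rests entirely on the uniqueness-set property already proved in Theorem \ref{thm:unique} (which guarantees that agreement on $\tilde\cA$ propagates to agreement on all of $[0,\infty)$). The only point requiring a little care is the verification that $\ol\phi,\ul\phi\in\BUC([0,\infty))$ so that the two auxiliary radial problems fall under the scope of Theorem \ref{thm:main1}.
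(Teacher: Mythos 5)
Your proof is correct and is essentially the argument the paper has in mind: the paper states Proposition \ref{prop:nonrad} without proof as a ``straightforward consequence of Theorem \ref{thm:main2}'', and the intended reasoning is precisely your comparison sandwich $u^-\le u\le u^+$ between the radial problems with data $w_0^\pm$, combined with the formula \eqref{func:psi} for their profiles and the collapse forced by \eqref{assump:nonrad}. Your added check that $\ol\phi,\ul\phi\in\BUC([0,\infty))$ (so that (A1)--(A2) apply to the auxiliary problems) is a reasonable and correct detail.
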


We give two nontrivial examples satisfying condition \eqref{assump:nonrad} in Proposition \ref{prop:nonrad}. 

\begin{ex}
{\rm
Let $\phi \in C_c(B(0,n-1))$, and $w_0^{\pm}$, $v_0^{\pm}$ be the functions defined by \eqref{func:w}, \eqref{func:v}, respectively. 
Then, we can easily see that \eqref{assump:nonrad} holds.  
Indeed, noting  
if $\gam(z) \in (0,n-1)$, then
\[
\gam'(z) \leq 1- \frac{n-1}{\gam(z)} <0, 
\]
which means that we cannot reach $s \in \tilde \cA$ if we start from $t\in (0,n-1)$, 
and thus $d(s,t)=-\infty$ for all $s\in \tilde \cA$, and $t \in (0,n-1)$. 
This example says that information of initial data in $B(0,n-1)$ does not matter 
for the large time behavior for (C) if $f$ and $\tilde u_0$ are radially symmetric. 
} 
\end{ex}

\begin{ex}
{\rm
Assume that $c:=\max_{r\ge1}\tilde{f}(r)>0$ and $\|u_0\|_{L^\infty} <R$ for some $R>0$, 
where $R$ is given by \eqref{f-source}. 
Let $\phi \in \BUC(\R^n)$ be a function satisfying
\[
\|\phi\|_{L^\infty} <R \quad \text{and}\quad \phi=0 \text{ in } B\left(0,R+\frac{6R}{c}\right).
\]
Let $w_0^{\pm}$, $v_0^{\pm}$ be the functions defined by \eqref{func:w}, \eqref{func:v}, respectively. 
Then, assumption \eqref{assump:nonrad} holds.  

Note first that we have $w_0^+(t)=w_0^-(t)$ for all $t\in[0,R+6R/c]$. 
Thus, we just need to prove that the maximum of the left hand side of \eqref{func:v} does not achieve for $t>R+6R/c$. 
Fix $t>R+6R/c$, and let $\gam$ be an admissible curve in \eqref{rep-phi} with 
$\gam(0)=t$, $\gam(T)=s$. 
If $\gam(r)>n-1$, then 
\[
-2 \leq -1-\frac{n-1}{\gam(r)} \leq \gam'(r) \leq 1- \frac{n-1}{\gam(r)}.
\]
Hence, it takes $\gam$ at least $3R/c$ time spending outside of $B(0,R)$ 
since $s\in\cA \subset [n-1,\infty)$.  
This gives that
\[
\int_0^T (\tilde{f}(\gam(r)) - c )\,dr \leq -\frac{3R}{c} c=-3R.
\]
Therefore, $d(s,t) \leq -3R$, and thus,
\[
d(s,t)+w_0^{\pm}(t) \leq -3R + R =-2R <w_0^{\pm}(s)=d(s,s)+w_0^{\pm}(s), 
\]
which implies \eqref{assump:nonrad}. 
}
\end{ex}

\section{Case 2: Non radially symmetric setting (a toy case)} \label{sec:case2}
In this section, we \textit{always} assume that (A3)--(A5) hold. 
Due to (A3), the asymptotic speed is given by $c=\max_{\R^n}f>0$ (see \cite{GMOT}).

\subsection{Ergodic problem \eqref{eq:static}}

\begin{lem}\label{lem:simple1}
Assume that {\rm (A3)} holds. 
Ergodic problem \eqref{eq:static} has a solution which is bounded from above.
\end{lem}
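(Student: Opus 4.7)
My plan is to apply Perron's method between an explicit bounded supersolution and a subsolution built by radial reduction to Section~\ref{sec:case1}.

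First, I would observe that $v\equiv 0$ is a bounded viscosity supersolution of \eqref{eq:static}: since $f\le c=\max_{\R^n}f$, and the singular operator $-(\Div(Dv/|Dv|)+1)|Dv|$ relaxes to zero at $Dv=0$, the supersolution inequality $0\ge f-c$ holds trivially at every test point, regardless of the choice of test function.

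Next, to manufacture a subsolution I would symmetrize the source by setting
\[
\tilde f(r):=\min_{|y-x_0|=r}f(y),\qquad r\ge 0.
\]
By (A3) and $\overline{B(x_0,n-1)}\subset U$, the function $\tilde f$ is nonnegative, Lipschitz, compactly supported, and identically equal to $c$ on $[0,n-1]$, so $\max_{r\ge n-1}\tilde f(r)=c$. I would then apply Theorem \ref{thm:ex} to $\tilde f$ (whose proof uses only boundedness and the sign of $h=\tilde f-c$, and therefore adapts to Lipschitz data) to produce a profile $\psi\in C([0,\infty))$, bounded from above, constant on $[0,n-1]$, and solving the radial ergodic equation on $(0,\infty)$. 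Setting $V(x):=\psi(|x-x_0|)$, the identity used throughout Section \ref{sec:case1} gives, wherever $\psi_r(|x-x_0|)\ne 0$,
\[
-\Bigl(\Div\Big(\tfrac{DV}{|DV|}\Big)+1\Bigr)|DV|
=\tilde f(|x-x_0|)-c\le f(x)-c,
\]
so $V$ is a classical subsolution at those points. On the singular set $\{\psi_r=0\}$, which contains $\overline{B(x_0,n-1)}$, the radial equation forces $\tilde f(|x-x_0|)=c$, hence $f(x)-c\ge 0$, so the relaxed viscosity subsolution test at $DV=0$ goes through for every admissible test function.

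After translating $V$ by $-\sup V$ to ensure $V\le 0$, Perron's method for this singular level-set operator (see \cite{CGG,ES1}) should yield a viscosity solution $v$ of \eqref{eq:static} with $V\le v\le 0$, which is in particular bounded from above. The main obstacle I anticipate is the rigorous verification of the subsolution property of $V$ at the spherical kink $\partial B(x_0,n-1)$, where $V$ is merely Lipschitz and the mean-curvature operator is singular; this will be handled by the standard relaxation/test-function analysis for level-set equations in the cited references, crucially exploiting that $f\equiv c$ on this sphere so that any extra curvature contributions can be absorbed.
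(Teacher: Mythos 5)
Your argument is correct and is essentially the paper's proof: symmetrize $f$ radially (your $\tilde f(r)=\min_{|y-x_0|=r}f(y)$ is exactly the paper's $\underline f$, up to translating $x_0$ to the origin), invoke the construction of Theorem \ref{thm:ex} to get a bounded-from-above radial subsolution of \eqref{eq:static}, and run Perron between it and a supersolution. The one genuine difference is the upper barrier: you take the constant $0$ (correct, since $f-c\le 0$ and every test function touching a constant from below has zero gradient, so the relaxed supersolution test is trivial), whereas the paper takes the radial solution $\overline v$ built from $\overline f(r)=\max_{|x|=r}f(x)$. Both choices prove the lemma as stated, but the paper's choice buys more: the Perron solution is squeezed between $\underline v$ and $\overline v$, hence satisfies the linear decay \eqref{behavior-v1-v2} and lies in $\Gam_f$, which is precisely what the comparison argument of Proposition \ref{prop:unique} and the proof of Theorem \ref{thm:nonrad} use later; your solution is only known to satisfy $V\le v\le 0$, so it would not directly feed into that machinery. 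Two smaller points: your deferred verification of the subsolution property of $V$ across the sphere $\partial B(x_0,n-1)$ (and at any larger sphere where $\psi_r=0$) does go through, by the tangential-Hessian computation using that the sphere of radius $r\ge n-1$ has mean curvature at most $1$ and that $f=c$ there — this is the same issue the paper outsources to \cite{GMOT} for radial profiles; and your remark that Theorem \ref{thm:ex} must be applied to merely Lipschitz radialized data is accurate, but note the paper's own proof does the same with $\overline f,\underline f$, so you are at parity with it there.
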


\begin{proof}
Set 
\[
\overline{f}(r):= \max_{|x|=r} f(x), \quad 
\underline{f}(r):= \min_{|x|=r} f(x) \quad \text{ for all } r\geq 0. 
\]
Without loss of generality, we can assume that $B(0,n-1)\subset U$. 
Then, there exist $n-1\leq a \leq b$ such that $\underline{f} \leq \overline{f}$ and
\[
\begin{cases}
\underline{f}(r) =c  \text{ for all } r \in [0,a], \text{ and } \underline{f}(r) <c \text{ for all } r>a,\\ \smallskip
\overline{f}(r) =c  \text{ for all } r \in [0,b], \text{ and }  \overline{f}(r) <c \text{ for all } r>b.
\end{cases}
\]
Thanks to the construction in Theorem \ref{thm:ex},  
there exist solutions $\overline{v}$ and $\underline{v}$, respectively, to \eqref{eq:static} with $f(x)=\overline{f}(|x|), \underline{f}(|x|)$ and $c=\max_{\R^n}f$, which are radially symmetric and bounded from above. 

By adding a constant, we have further that
\[
\underline{v} \leq \overline{v} \ \text{on} \ \R^n, \quad \text{and} \quad 
\underline{v}(x)=\overline{v}(x) = 0 \text{ in } B(0,a).
\]
We thus are able to use the Perron method to construct a solution $v$ to \eqref{eq:static} by 
\[
v(x) = \sup\left\{w(x)\,:\, \text{$w$ is a subsolution to \eqref{eq:static} and } \underline{v} \leq w \leq \overline{v} \ \text{on} \ \R^n \right\}
\]
for $x\in \R^n$. 
\end{proof}

Let $\overline{v}, \underline{v}$ be the functions defined in the proof of Lemma \ref{lem:simple1}. 
It is important noting that, as $f$ is compactly supported, 
$h(r)=-c$ for $r\gg 1$. Thus, integrating the equation \eqref{def:neg} with respect to $r$ yields  
\begin{equation}\label{behavior-v1-v2}
-c|x| -c(n-1) \log(|x|+1) - C \leq \underline{v}(x) \leq \overline{v}(x) \leq  -c|x|+C \quad \text{for}\ x\in \R^n.
\end{equation}
Define
\[
\Gam_f=\{v: \R^n \to \R \,:\, v \text{ satisfies } \underline{v} \leq v \leq \overline{v}\}.
\]
We have the following uniqueness result of solutions to \eqref{eq:static} in $\Gam_f$.

\begin{prop}\label{prop:unique}
Assume that $v,w \in \Gam_f$ are, respectively, a subsolution and a supersolution to \eqref{eq:static}.
Assume further that $v \leq w$ on $U$. Then $v \leq w$ in $\R^n$.
\end{prop}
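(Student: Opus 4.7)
The plan is a proof by contradiction based on doubling the variables, together with a penalty at infinity to handle the unboundedness of $\R^n$. Suppose for contradiction that $M := \sup_{\R^n}(v - w) > 0$. Since $v \leq w$ on $U$ by hypothesis, this supremum is strictly larger than $\sup_U(v - w)$, so it is approached in $V := \R^n \setminus U$. From the bounds $v, w \in \Gam_f$ together with the asymptotic estimate \eqref{behavior-v1-v2},
\[
(v - w)(x) \leq (\ol v - \ul v)(x) \leq c(n-1)\log(|x|+1) + 2C,
\]
so for $\chi(x) := (1 + |x|^2)^{1/2}$ and any $\eta > 0$, the map $x \mapsto v(x) - w(x) - \eta \chi(x)$ tends to $-\infty$ as $|x| \to \infty$. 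For $\eta$ small its supremum remains positive and is attained at some $\bar x_\eta \in V$.

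I would then perform the standard variable doubling: for $\ep > 0$ set
\[
\Phi_{\ep,\eta}(x, y) := v(x) - w(y) - \frac{|x - y|^2}{2\ep} - \eta\chi(x),
\]
whose maximizer $(x_\ep, y_\ep)$ satisfies $x_\ep, y_\ep \to \bar x_\eta$ and $|x_\ep - y_\ep|^2/\ep \to 0$ as $\ep \to 0$, with both points in $V$ for $\ep$ small. The Crandall--Ishii--Lions maximum principle yields subjet/superjet elements
\[
(p_\ep + \eta D\chi(x_\ep),\, X_\ep + \eta D^2\chi(x_\ep)) \in \overline J^{2,+} v(x_\ep), \qquad (p_\ep,\, Y_\ep) \in \overline J^{2,-} w(y_\ep),
\]
with $p_\ep = (x_\ep - y_\ep)/\ep$ and the usual matrix inequality between $X_\ep$ and $Y_\ep$. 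Writing the viscosity sub- and super-solution inequalities for
\[
G(x, p, X) := -\tr\bigl((I - \hat p \otimes \hat p) X\bigr) - |p| + c - f(x) \qquad (p \neq 0),
\]
extended to $p = 0$ via the relaxed formulation of Chen--Giga--Goto and Evans--Spruck, I subtract them and exploit (i) the common projection $I - \hat p \otimes \hat p$ together with the matrix inequality to control the second-order terms up to $o_\ep(1)$, (ii) the Lipschitz regularity of $f$ so that $f(x_\ep) - f(y_\ep) = o_\ep(1)$, and (iii) the boundedness of $|D\chi|$ and $|D^2\chi|$ so the penalty introduces only an $O(\eta)$ error. The aim is to reach $c - f(\bar x_\eta) \leq O(\eta)$, which contradicts $f(\bar x_\eta) < c$ once $\ep \to 0$ and $\eta$ is chosen small.

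I expect the main obstacle to lie in step (i): the mean-curvature operator is degenerate, is singular at $p = 0$, and does not by itself supply the strict monotonicity that usually drives such comparison arguments. Care is needed when $p_\ep \to 0$ in the limit, where the relaxed subjet formulation for level-set MCF must be invoked, and when $p_\ep$ stays bounded away from zero one must extract the correct sign from the Crandall--Ishii--Lions matrix inequality combined with projection onto $(\hat p_\ep)^\perp$. The hypothesis $v, w \in \Gam_f$ is used precisely to furnish the growth bound that makes the penalty $\eta\chi$ coercive at infinity and at the same time negligible on compact subsets of $V$, so that all the contributions that are not already $o_\ep(1)$ or $O(\eta)$ cancel out, and the strict inequality $f(\bar x_\eta) < c$ (available because $\bar x_\eta \notin U$) provides the slack needed to close the argument.
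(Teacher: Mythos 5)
Your overall strategy (doubling of variables, Crandall--Ishii, contradiction at a maximum point lying outside $U$ where $f<c$) is in the right family, but the proposal is missing the one device that makes such an argument close, and the target inequality you state cannot be reached by the computation you describe. If you simply subtract the supersolution inequality at $y_\varepsilon$ from the subsolution inequality at $x_\varepsilon$, the terms $f(\cdot)-c$ enter both sides with the same weight, so after using the matrix inequality the best you can hope for is $f(y_\varepsilon)-f(x_\varepsilon)\le o_\varepsilon(1)+O(\eta)$, which is $0\le 0$ in the limit: the quantity $c-f(\bar x_\eta)$ never appears with a favorable sign, because \eqref{eq:static} has no zeroth-order term and nothing in your scheme is strictly monotone. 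The paper manufactures the missing slack by comparing $\lambda v$ with $w$ for $\lambda>1$: the level-set operator is positively $1$-homogeneous in $(Du,D^2u)$, so $\lambda v$ is a subsolution with right-hand side $\lambda(f-c)$ (see \eqref{sub-v}--\eqref{super-w}), and the final inequality becomes $0\le(\lambda-1)(f(z)-c)<0$ at a maximum point $z\notin U$. The same scaling also yields coercivity at infinity for free, since by \eqref{behavior-v1-v2} one has $\lambda v-w\le -(\lambda-1)c|x|+c(n-1)\log(|x|+1)+C_\lambda$, so no additive penalty $\eta\chi$ is needed at all. Without the $\lambda$-trick (or an equivalent strict-subsolution perturbation) your argument has no contradiction to reach, even though you correctly identify that $f(\bar x_\eta)<c$ should be the source of the slack.

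The second gap is the treatment of the singular operator, and it is aggravated by your choice of penalties. Your step (i) invokes a ``common projection'' $I-\hat p\otimes\hat p$, but after adding $\eta\chi$ to the $x$-variable only, the subsolution is tested with gradient $p_\varepsilon+\eta D\chi(x_\varepsilon)$ while the supersolution is tested with $p_\varepsilon$; the two projections differ by an amount of order $\eta/|p_\varepsilon|$, the matrices $X_\varepsilon,Y_\varepsilon$ are only of size $O(1/\varepsilon)$, and $|p_\varepsilon|=|x_\varepsilon-y_\varepsilon|/\varepsilon$ may degenerate, so the second-order discrepancy is not $o_\varepsilon(1)+O(\eta)$ and cannot be absorbed as claimed. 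Moreover, the case $x_\varepsilon=y_\varepsilon$ is not handled by the quadratic coupling $|x-y|^2/(2\varepsilon)$, whose Hessian $I/\varepsilon$ does not vanish; the paper instead uses the quartic coupling $|x-y|^4/\varepsilon$, so that at $x_\varepsilon=y_\varepsilon$ the test function has vanishing gradient \emph{and} Hessian and the relaxed (Chen--Giga--Goto) test gives $0\le\lambda(f(x_\varepsilon)-c)<0$ at once, ruling this case out. These are precisely the points you list as ``obstacles,'' but they are not incidental: as written the proposal does not constitute a proof, and the repair is the paper's $\lambda$-scaling together with the quartic coupling, rather than the one-sided $\eta\chi$ penalization with a quadratic coupling.
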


Note that considering solutions in $\Gam_f$ is enough for our purpose of studying large time behavior to (C).

\begin{proof}
Assume by contradiction that there exists $y\in \R^n \setminus U$ such that $v(y)>w(y)$.
We can find $\lam>1$, which is close enough to $1$, such that
\[
0<\max_{\R^n} \left( \lam v - w \right) = \lam v(z) - w(z)
\]
for some $z\in \R^n \setminus U$. Note that $f(z)<c$.

For $\ep>0$, thanks to \eqref{behavior-v1-v2} and the fact that $\lam>1$, we are able to find $x_\ep,y_\ep \in \R^n$ such that
\[
\max_{x,y \in \R^n} \left( \lam v(x) - w(y) - \frac{|x-y|^4}{\ep} \right) = \lam v(x_\ep) - w(y_\ep) - \frac{|x_\ep-y_\ep|^4}{\ep},
\]
and, up to passing to a subsequence,
\[
\lim_{\ep \to 0} (x_\ep, y_\ep)=(z,z) \quad \text{and} \quad \lim_{\ep \to 0}  \frac{|x_\ep-y_\ep|^4}{\ep}=0.
\]
By the Crandall--Ishii Lemma \cite[Lemma 3.2]{CIL}, there exist $X_\ep, Y_\ep \in \mathbb{S}^n$ satisfying
\begin{equation}\label{CI}
 \begin{pmatrix} X_\ep &0\\ 0&-Y_\ep \end{pmatrix} 
\leq J +\ep J^2,
\end{equation}
where
\[
J=\frac{4}{\ep}\begin{pmatrix} Z &-Z\\ -Z&Z \end{pmatrix} \quad \text{and} \quad
Z=|x_\ep - y_\ep|^2 I_n + 2 (x_\ep-y_\ep)\otimes (x_\ep-y_\ep).
\]
We claim that for $\ep>0$ sufficiently small, $x_\ep \neq y_\ep$.
Indeed, assume otherwise that $x_\ep=y_\ep$, which is quite close to $z$. Then $f(x_\ep)<c$.
Since $\lam v(x) - \frac{|x-x_\ep|^4}{\ep}$ has a max at $x_\ep$, we employ the viscosity subsolution test to yield
\[
-\text{tr}((I_n-q\otimes q)0_{n})  \leq \lam(f(x_\ep)-c) <0.
\]
Here, $0_{n}$ is the zero matrix of size $n$, and $q \in \R^n$ is a vector such that $|q| \leq 1$.
We get a contradiction immediately.

Therefore,  for $\ep>0$ sufficiently small, $x_\ep \neq y_\ep$.
By viscosity subsolution and viscosity supersolution tests, we have
\begin{equation}\label{sub-v}
-\text{tr}((I_n-p\otimes p)X_\ep) - \frac{4|x_\ep - y_\ep|^3}{\ep} \leq \lam(f(x_\ep)-c),
\end{equation}
and
\begin{equation}\label{super-w}
-\text{tr}((I_n-p\otimes p)Y_\ep) - \frac{4|x_\ep - y_\ep|^3}{\ep} \geq f(y_\ep)-c.
\end{equation}
Here, $p = \frac{x_\ep - y_\ep}{|x_\ep -y_\ep|}$.
Combine \eqref{CI}--\eqref{super-w} to yield
\[
0\leq -\text{tr}((I_n-p\otimes p)(X_\ep-Y_\ep)) \leq \lam(f(x_\ep)-c) - (f(y_\ep)-c).
\]
Let $\ep \to 0$ to get 
\[
0 \leq (\lam-1) ( f(z)-c) <0,
\]
which is absurd.
\end{proof}

\subsection{Large time behavior}\label{subsec:large2}
In this subsection, under (A3)--(A5), we give the proof of Theorem \ref{thm:nonrad}.

\begin{lem}\label{lem:subsln}
Assume {\rm (A3)--(A5)}. 
Let $u$ be the viscosity solution to {\rm (C)}.
Then,
\[
u(x,t) = ct \quad \text{ for all } (x,t) \in U \times [0,\infty).
\]
\end{lem}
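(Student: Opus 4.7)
The plan is to establish the two-sided bound: $u\le ct$ on $\R^n\times[0,\infty)$, and $u\ge ct$ on $U\times[0,\infty)$. Their conjunction yields the lemma.

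For the upper bound, the spatially constant function $v(x,t)\equiv ct$ is a viscosity supersolution of {\rm(C)}. Since $Dv\equiv 0$ and $D^2v\equiv 0$, the viscosity test at any point reduces, through the lower semicontinuous envelope of the operator at $p=0$, to $v_t=c\ge f(x)$, which holds because $c=\max_{\R^n}f$. With $v(\cdot,0)=0=u_0$, the comparison principle yields $u\le ct$.

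For the lower bound, by {\rm(A4)} it suffices to prove $u\ge ct$ on each $E_i$ separately. Fix $i\in I$, write $E:=E_i$, and recall $E\subseteq U$ has $C^2$ boundary with $\kappa+1\ge 0$ on $\partial E$. The idea is to construct a global viscosity subsolution $w$ of {\rm(C)} with $w(\cdot,0)\le 0=u_0$ and $w(x,t)=ct$ for every $x\in E$; the comparison principle then forces $u\ge w\ge ct$ on $E$. Let $d(x):=\dist(x,E)\ge 0$. In a one-sided neighborhood of $\partial E$ outside $E$, $d$ is $C^2$, and the standard identity $\Delta d=\kappa^{\mathrm{out}}$ on $\partial E$ (from outside) together with {\rm(A4)} yields $\Delta d+1\ge 0$ there. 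I set
\[
w(x,t):=\bigl(ct-\zeta(d(x))\bigr)_+,
\]
where $\zeta\colon[0,\infty)\to[0,\infty)$ is a nondecreasing function with $\zeta(0)=0$ to be chosen. The subsolution inequality is then checked in three regimes: \emph{inside $E$}, where $w=ct$ and $f=c$ on $E\subseteq U$, so the degenerate-gradient test passes as the mirror image of the upper-bound argument; \emph{in the transition region} $\{0<ct-\zeta(d)\}\setminus E$, where the inequality reduces to a pointwise relation between $\zeta'(d)$, $c-f\ge 0$, and $\Delta d+1\ge 0$, solvable by an appropriate pointwise choice of $\zeta'$; and \emph{in the far region} $\{\zeta(d)\ge ct\}$, where $w\equiv 0$ and the test becomes $0\le f$, always valid.

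The main obstacle is making this construction globally valid. The inequality $\Delta d+1\ge 0$ is \emph{a priori} guaranteed only in a neighborhood of $\partial E$, and $d$ loses $C^2$ regularity on the medial axis of $\R^n\setminus E$. To handle this, one replaces the signed distance by an $E$-adapted barrier built from the forward level-set evolution $\{E(t)\}_{t\ge 0}$ of $E$ under the pure driven flow $u_t=(\Div(Du/|Du|)+1)|Du|$, which is well-defined thanks to {\rm(A4)} and satisfies $E(t)\supseteq E$ for all $t\ge 0$. The viscosity subsolution property at $\partial E$ itself and at the matching locus $\{ct=\zeta(d)\}$ is then verified by a Crandall--Ishii touching argument, analogous in spirit to the proof of Proposition~\ref{prop:unique}.
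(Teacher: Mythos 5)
Your upper bound $u\le ct$ is fine and matches the paper. The lower bound, however, is where all the content of Lemma \ref{lem:subsln} sits, and your barrier construction has concrete gaps there. First, the curvature identity is mis-stated: with $d=\dist(\cdot,E_i)$ and the paper's sign convention for $\kappa$ (forced by (A3), which must admit $E_i=\ol{B}(x_0,n-1)$, and by Section \ref{sec:further}, where unit disks satisfy $-\kappa-1=0$), one has $\kappa=-\Delta d$ on $\partial E_i$, so (A4) gives $\Delta d\le 1$, not ``$\Delta d+1\ge 0$''. The inequality your transition-region computation actually needs for $w=ct-\zeta(d)$ is $(1-\Delta d)\,\zeta'(d)\ge c-f$, and here ``an appropriate pointwise choice of $\zeta'$'' hides a genuine difficulty: (A3)--(A4) force you to allow boundary points where $\kappa+1=0$ (e.g.\ exactly the ball of radius $n-1$), and there $1-\Delta d$ vanishes on $\partial E_i$ while $c-f(x)$ may be strictly positive arbitrarily close to $\partial E_i$, since the transition region generally leaves $U$. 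To keep $\zeta'(d)\ \ge\ \sup\{(c-f(x))/(1-\Delta d(x))\,:\,d(x)=d\}$ finite as $d\to0$ you need two facts you never invoke: $Df\equiv 0$ on $U$ (every point of $U$ is a maximum point of $f$, hence $c-f(x)=o(d(x))$ uniformly) and a quantitative lower bound of the type $1-\Delta d\gtrsim\min(1,d)$ along normal rays (using that $\sum_i\kappa_i$ close to the critical value forces $\sum_i\kappa_i^2$ bounded below). Without these, the transition step fails precisely at the critical boundaries that (A4) is designed to admit.

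Second, the global obstruction you yourself flag (loss of $C^2$ regularity of $d$ past the cut locus, and validity of the curvature bound there) is not resolved but displaced: your substitute barrier ``built from the forward level-set evolution $\{E(t)\}$'' rests on the unproved claim that $E(t)\supseteq E$ for all $t$ ``thanks to (A4)''. That claim is essentially the statement that $\mathbf{1}_{E}$ is a set-theoretic subsolution of $V=\kappa+1$ in the sense of Definition \ref{defn:set-sta}, which is Lemma \ref{lem:subsln} in disguise, so invoking it is circular; and a Crandall--Ishii doubling argument as in Proposition \ref{prop:unique} is not the relevant tool for verifying that a barrier is a subsolution and is not carried out. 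The paper's route is much shorter and avoids all of this: it checks directly that the upper semicontinuous function $\varphi(x,t)=ct\,\mathbf{1}_{E_i}(x)$ is a viscosity subsolution of (C), the only nontrivial case being a test point $x_0\in\partial E_i$, where $\psi_t(x_0,t_0)\le c=f(x_0)$ and either $D\psi(x_0,t_0)=0$ with $D^2\psi(x_0,t_0)\ge0$, or $D\psi(x_0,t_0)\neq0$ and the inclusion $E_i\subset\{\psi(\cdot,t_0)\ge\psi(x_0,t_0)\}$ together with (A4) yields $-\Div\big(D\psi/|D\psi|\big)\le1$; comparison with $u_0\equiv0$ (A5) then gives $u\ge ct$ on $E_i$. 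If you wish to salvage your continuous barrier, you must supply the quantitative estimates above and, beyond the reach of $\partial E_i$, an Alexandrov/Jensen-type argument exploiting semiconcavity of $d$ (so that the a.e.\ inequality upgrades to the viscosity property); otherwise verify the subsolution property of the indicator directly as the paper does.
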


\begin{proof}
It is clear that $u(x,t) \leq ct$ for all $(x,t) \in \R^n \times [0,\infty)$.

To show the reverse inequality for $(x,t) \in U \times [0,\infty)$, we simply show that, for each $i\in I$, the following function
\[
\varphi(x,t) = ct \mathbf{1}_{E_i}(x) \quad \text{ for } (x,t) \in \R^n \times [0,\infty)
\]
is a subsolution to (C), where $\mathbf 1_E$ denotes the characteristic function of $E$, i.e., $\mathbf 1_E(x)=1$ for $x \in E$ and $\mathbf 1_E(x)=0$ 
if $x \in E^c$.
Indeed, take a smooth test function $\psi$ such that $\varphi-\psi$ has a strict global maximum at $(x_0,t_0) \in \R^n \times (0,\infty)$ and $\varphi(x_0,t_0)=\psi(x_0,t_0)$.
We only need to check the case that $x_0 \in \partial E_i$ as other cases are straightforward.

First of all, it is clear that $\psi_t(x_0,t_0) \leq c$. If we have that $D\psi(x_0,t_0)=0$, then $D^2\psi(x_0,t_0) \geq 0$.
Thus, for vector $\eta = 0 \in \R^n$, we have
\[
\psi_t(x_0,t_0) - (\del_{ij} - \eta_i \eta_j)\psi_{x_i x_j}(x_0,t_0) \leq c  - \Del \psi(x_0,t_0) \leq c = f(x_0).
\]
Secondly, if $D\psi(x_0,t_0) \neq 0$, then $D\psi(x_0,t_0)$ is a normal vector to the level set $\left\{x \in \R^n\,:\, \psi(x,t_0)=\psi(x_0,t_0)\right\}$.
Furthermore, as $\varphi-\psi$ has a strict global maximum at $(x_0,t_0)$, we deduce that
\[
E_i \subset W=\left\{ x\in \R^n\,:\, \psi(x,t_0) \geq \psi(x_0, t_0) \right\}.
\]
Therefore, the mean curvature to $W$ at $x_0$ is smaller than or equal to the mean curvature to $E_i$ at $x_0$.
In light of (A4),
\[
-\text{div}\left(\frac{D\psi(x_0,t_0)}{|D\psi(x_0,t_0)|} \right) \leq 1,
\]
which gives
\[
\psi_t(x_0,t_0) - \left(\text{div}\left(\frac{D\psi(x_0,t_0)}{|D\psi(x_0,t_0)|} \right) + 1 \right) |D\psi(x_0,t_0)| \leq \psi_t(x_0,t_0)  \leq c=f(x_0). 
\qedhere
\]
\end{proof}

\begin{rem} \ \\
(i) By \cite[Lemma 5.1]{GMT}, we see that the set $\Gam(t)=\{x\in\R^n\,:\, u(x,t)=ct\}$ moves according to the obstacle problem of the surface evolution $V=\kap+1$ with obstacle $U$. 
Lemma \ref{lem:subsln} is consistent with this observation. \\
(ii) Let us consider a front propagation problem $V=1$ with a source term $f$. 
The corresponding equation is
\[
u_t-|Du|=f(x) \quad\text{in} \ \R^n \times (0,\infty),  
\]  
which is a typical example in \cite{NR, BLNP}. 
Then, the set $U=\{x\in \R^n\,:\,f(x)=\max_{\R^n} f\}$  plays a role of not only  a \textit{uniqueness set} for the associated ergodic problem 
but also a \textit{monotonicity set} of $u$, that is, 
$t\mapsto u(x,t)-ct$ is monotone for every $x\in U$. 

It is important emphasizing here that, in our problem, $U$ in general is not a monotonicity set  of $u$ if we assume only (A3).
Additional assumptions (A4)--(A5) are really needed in this current approach.
\end{rem}

\begin{proof}[Proof of Theorem {\rm{\ref{thm:nonrad}}}]
For $x\in \R^n$, set
\[
v(x)=\limsup_{t\to \infty}{}^* \left(u(x,t)-ct \right),\quad
w(x)=\liminf_{t\to \infty}{}_* \left(u(x,t)-ct \right).
\]
It is clear then that $v,w \in \Gam_f$ and $v,w$ are a subsolution and a supersolution to \eqref{eq:static}, respectively.
By Lemma \ref{lem:subsln}, we get further that $v=w=0$ on $U$.
We then use Proposition \ref{prop:unique} to yield the desired result.
\end{proof}

\begin{rem}
In fact, in this section, we can relax {\rm (A4)} by the following weaker assumption.
\begin{itemize}
\item[(A4')] $U=\cup_{i\in I} E_i$, where $I \neq \emptyset$ is an index set, and for each $i\in I$, $E_i \subset \R^n$ is a closed set with nonempty interior  satisfying $-\kappa-1 \leq 0$ on $\partial E_i$ in the sense of viscosity solutions to be defined in the following discussion.
\end{itemize}
This relaxation is pretty clear to see as one only needs to use (A4') in the proof of Lemma \ref{lem:subsln}.
\end{rem}

\noindent Let us now explain clearly assumption (A4').
We first recall the notion of set-theoretic solution $\{D_t\}_{t\ge0}\subset\R^n$ to the surface evolution equation 
\begin{equation}\label{eq:surface}
V=\kappa+1 \qquad \text{ on }   \partial D_t,  
\end{equation}
where $V$ and $\kappa$ denote the outward normal velocity and the outward mean curvature of $\partial D_t$, respectively. 

\begin{defn}\label{defn:set-sol}
We say that an evolving family $\{D_t\}_{t\ge 0}$ of open sets $D_t$ in $\mathbb{R}^n$ is a  set-theoretic subsolution (resp., supersolution) of \eqref{eq:surface} if $\mathbf{1}_{D_t}$ is a viscosity subsolution (resp., supersolution) of 
\begin{equation} \label{LE}
	u_t - \left(\operatorname{div}\left( \frac{Du}{|Du|} \right)+1 \right) |Du| = 0
	\quad\text{in}\quad	\R^n \times (0,\infty), 
\end{equation}
where $\mathbf{1}_{D_t}$ is a characteristic function of $D_t$, i.e., 
$\mathbf{1}_{D_t}(x)=1$ if $x\in D_t$ and 
$\mathbf{1}_{D_t}(x)=0$ if $x\not\in D_t$. 

We say that $\{D_t\}_{t\ge 0}$ is a  set-theoretic solution of \eqref{eq:surface} if it is both a set-theoretic subsolution and a set-theoretic supersolution. 
\end{defn}

Next, we give the definition of stationary set-theoretic solution for a closed set.
Of course, the definition of stationary set-theoretic solution for an open set follows in the same way.

\begin{defn}\label{defn:set-sta}
Let $E \subset \R^n$ be  a closed and bounded set with nonempty interior such that $\partial E = \partial (E^\circ)$.
Here, $E^\circ$ is the interior of $E$.

We say that $E$ satisfies $-\kap-1\le0$ on $\partial E$ (resp., $-\kap-1\ge 0$ on $\partial E$) in the sense of viscosity solutions if $\{D_t\}_{t \geq 0}$ with $D_t=E^\circ$ for all $t \geq 0$ is a set theoretic subsolution (resp., supersolution) to \eqref{LE}.

We say that $E$ satisfies $-\kap-1= 0$ on $\partial E$ if both $-\kap-1\le0$  and $-\kap-1\ge0$ on $\partial E$ hold.

\end{defn}

\section{Some further discussions}\label{sec:further}
In this section, we consider a particular case of $U$ in two dimensions, which is given by 
a stadium shape, that is,
\begin{equation}\label{U-2}
U = \bigcup_{x \in [-a,a]} \ol{B}((x,0),1),
\end{equation}
for some fixed $a>0$.

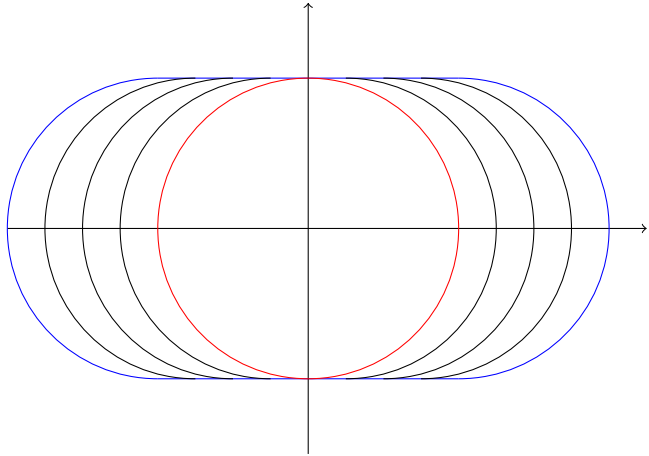
\begin{figure}[h]
\begin{center}
\begin{tikzpicture}[baseline=(current bounding box.north)]

\draw[blue] (-2,-2)--(2,-2);
\draw[blue] (-2,2)--(2,2);

\draw[->] (-4,0)--(4.5,0);
\draw[->] (0,-3)--(0,3);

\draw[red] (0,-2) arc(-90:90:2);
\draw (0.5,-2) arc(-90:90:2);
\draw (1,-2) arc(-90:90:2);
\draw (1.5,-2) arc(-90:90:2);

\draw[red] (0,-2) arc(90:-90:-2);
\draw (-0.5,-2) arc(90:-90:-2);
\draw (-1,-2) arc(90:-90:-2);
\draw (-1.5,-2) arc(90:-90:-2);

\draw[blue] (2,-2) arc(-90:90:2);
\draw[blue] (-2,-2) arc(90:-90:-2);

\end{tikzpicture}
\caption{$U$ and semicircles of radii $1$} \label{fig:U}
\end{center}
\end{figure}

Let us now consider \eqref{eq:static} in $U$ only. 
Since $U$ satisfies (A3), we have $c=\max_{\R^2}f$. 
Thus, the stationary equation \eqref{eq:static} in the interior of $U$ reads
\begin{equation}\label{eq:v-U}
-\left(\Div\Big(\frac{Dv}{|Dv|}\Big)+1\right)|Dv|=0 \quad \text{ in }  U^\circ.  
\end{equation}
Here, $U^\circ$ is the interior of $U$.
%
%

We first construct particular solutions to \eqref{eq:v-U}. 
\begin{lem} \label{lem:non-unique}
Assume $n=2$ and \eqref{U-2}.
Then, there exist infinitely many solutions $v\in C(U)$ of \eqref{eq:v-U} such that $v=0$ in $B(0,1)$ and $v  \neq 0$.
\end{lem}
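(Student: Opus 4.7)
My plan is to observe that \eqref{eq:v-U} is positively $1$-homogeneous in $Dv$, so $v$ is a viscosity solution if and only if $\lambda v$ is, for every $\lambda>0$. It therefore suffices to exhibit a single nontrivial viscosity solution $v\in C(U)$ with $v\equiv 0$ on $\overline{B}(0,1)\cap U$; the family $\{\lambda v\}_{\lambda>0}$ then produces infinitely many distinct solutions of the required type.

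Guided by Figure \ref{fig:U}, I would take
\[
v(x,y):=\min\bigl\{0,\ \sqrt{1-y^2}-|x|\bigr\},\qquad (x,y)\in U,
\]
whose level sets $\{v=-c\}$ for $c\in(0,a]$ are exactly the two semicircles of radius $1$ centered at $(\pm c,0)$, and these semicircles foliate $U\setminus\overline{B}(0,1)$. Clearly $v\in C(U)$, $v\equiv 0$ on $\overline{B}(0,1)\cap U$, and $v<0$ strictly on the nonempty open set $U\setminus\overline{B}(0,1)$. On the smooth regions, a direct computation shows $Dv/|Dv|$ coincides with the \emph{inward} unit normal to the foliating semicircle, whose divergence equals $-1$ (the mean curvature of a unit circle with inward normal), so $v$ classically solves \eqref{eq:v-U} on $U^\circ\setminus\partial B(0,1)$; on $B(0,1)\cap U^\circ$ the constant $0$ trivially does.

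The delicate step is the viscosity verification at each seam point $P=(x_0,y_0)\in\partial B(0,1)\cap U^\circ$; by symmetry assume $x_0>0$, and set $\nu=(x_0,y_0)$, $\tau=(-y_0,x_0)$. A Taylor expansion gives $v(P+t\nu)=-t/x_0+O(t^2)$ for $t>0$ on the outer side, while $v\equiv 0$ on the inner side. For the supersolution test, any $C^2$ function $\varphi\le v$ with $\varphi(P)=0$ would attain a local maximum at $P$, forcing $D\varphi(P)=0$ and $\varphi(P+t\nu)=O(t^2)$; this contradicts $\varphi\le -t/x_0+O(t^2)$ for small $t>0$, so no admissible $\varphi$ exists and the supersolution test is vacuous. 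For the subsolution test, any $C^2$ function $\varphi\ge v$ with $\varphi(P)=0$ must satisfy, after a Taylor analysis of the two-sided constraints $\varphi\ge 0$ on the inner side and $\varphi\ge v$ on the outer side, $D\varphi(P)\cdot\tau=0$, $\beta:=D\varphi(P)\cdot\nu\in[-1/x_0,0]$ and $C:=\tau^{T}D^{2}\varphi(P)\tau\ge\beta$; the required subsolution inequality in two dimensions then reduces to $C\ge-|\beta|$, which is automatic since $\beta\le 0$ gives $-|\beta|=\beta$ (the degenerate case $\beta=0$ is handled by evaluating the $F_{*}$ envelope at $q=\tau$, using the inner radial constraint $\nu^{T}D^{2}\varphi(P)\nu\ge 0$).

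Combined with the scaling observation, this finishes the proof. The hardest part is the viscosity check on the seam $\partial B(0,1)\cap U^\circ$, where $v$ is only Lipschitz; the clean mechanism is that the supersolution test is vacuous because $v$ drops linearly outward but is constant inward, and the subsolution test collapses to the one-sided second-order bound $\tau^{T}D^{2}\varphi\,\tau\ge\beta$ that the test function is already forced to satisfy.
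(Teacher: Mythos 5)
Your proof is correct and follows essentially the same route as the paper: the explicit profile foliated by unit semicircles, $v=\sqrt{1-x_2^2}-x_1$ off the unit disk and $0$ inside, is exactly the paper's construction (your symmetrized version with $|x_1|$ changes nothing, since the two negative lobes meet $\{x_1=0\}$ only on $\partial U$), and your viscosity check along the seam $\partial B(0,1)\cap U^\circ$ — supersolution test vacuous, subsolution test reducing to $\tau^T D^2\varphi\,\tau\ge D\varphi\cdot\nu$, which the touching constraints already force — rigorously fills in what the paper treats as a straightforward computation. The only real difference is how the infinite family is generated: you use positive $1$-homogeneity of the operator ($\lambda v$, $\lambda>0$), while the paper composes with functions $\phi$ satisfying $\phi(0)=0$; both are valid.
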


\begin{proof}
We provide an explicit construction of a solution $v$ as following.

For $x\in U$ such that $x_1 \leq 0$ or $|x| \leq 1$, we set $v(x)=0$.
For $x\in U$ such that $x_1>0$ and $|x|>1$, we set
\[
v(x) = \sqrt{1 - x_2^2} - x_1.
\]
Let us explain a bit the definition of $v$ in the later part. 
For any $0\le c\le a$, setting $\Gam_c:=\{(x_1,x_2)\,:\, \sqrt{1-x_2^2}-x_1=-c\}$, 
we can easily see that $\Gam_c\subset\partial B(c,1)$. 
Thus, geometrically, if $x$ stays on a semicircle of radius $1$ with center at $(0,c)$ for $c>0$, then $v(x)=-c$.

One can check analytically that $v$ is a solution to \eqref{eq:v-U} in a straightforward way.
Let us instead give a geometrical explanation here.
Indeed, for each $b<0$, the level set $\{x\,:\,v(x)=b\}$ is exactly a semicircle of radius $1$.
Thus, for $x \in \{x\,:\, v(x)=b\}$, the mean curvature of the level set at $x$ is exactly $1$, which means
\[
-\Div\left(\frac{Dv(x)}{|Dv(x)|}\right) = 1,
\]
which confirms that $v$ is a solution to \eqref{eq:v-U}.

Now, to create infinitely many such solutions, we just need to define
\[
\tilde v(x) = \phi(v(x)) \quad \text{ for all } x \in U,
\]
where $\phi:\R \to \R$ is any smooth function such that $\phi(0)=0$.
Then $\tilde v$ is always a solution to \eqref{eq:v-U}.
The proof is complete.
\end{proof}

Next, we give a result to characterize a solution obtained in Lemma \ref{lem:non-unique}.
\begin{lem} \label{Ch}
Let $u$ be a solution of \eqref{eq:v-U} which is continuous up to the boundary of $U$.
Assume that $u(x)=0$ for $x\in U$ such that $x_1 \leq 0$ or $|x| \leq 1$.
Assume further that $u$ is strictly decreasing in $x_1$ for $x_1>0$ at the boundary, 
that is, $u(x_1, x_2)>u(\tilde x_1, x_2)$ for any $(x_1, x_2), (\tilde x_1, x_2)\in\partial U$ with $0<x_1<\tilde x_1$. 
Then, the level curves of $u$ for $x_1>0$ agree with those of $v(x)=\sqrt{1-x^2_2}-x_1$.
\end{lem}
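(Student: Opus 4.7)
The plan is to use the PDE \eqref{eq:v-U} to show that each nontrivial level curve of $u$ in the region $\{x_1>0\}$ is an arc of a circle of radius~$1$, and then to pin down the exact location and orientation of the arc using the boundary data and continuity.

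First I would verify that $u\le 0$ throughout $U$, so that the nontrivial level curves correspond to levels $c\in(\min_U u,0)$. This follows from the strict monotonicity of $u$ in $x_1$ along $\partial U$, which forces $u\le u(0,\pm 1)=0$ on $\partial U\cap\{x_1>0\}$, combined with $u\equiv 0$ on $B(0,1)\cup\{x_1\le 0\}$ and a comparison argument for \eqref{eq:v-U} applied in the right region. Wherever $|Du|\ne 0$, the equation reads $-\Div(Du/|Du|)=1$, so each level curve $\gamma_c:=\{u=c\}\cap U^\circ$ has constant curvature~$1$ in the direction of $Du/|Du|$. By the planar Alexandrov-type fact in the viscosity sense that the paper develops separately, $\gamma_c$ is a single simple arc of a circle of radius~$1$, and the strict boundary monotonicity ensures that its two endpoints lie on $\partial U$.

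The key geometric step is to identify which radius-$1$ arc this is. For a level $c$ whose endpoints $(p_1,1)$ and $(p_2,-1)$ lie on the top and bottom straight parts of $\partial U$, the chord joining them has length $\sqrt{(p_1-p_2)^2+4}$, which cannot exceed the diameter~$2$ of the enclosing unit circle; hence $p_1=p_2=:p$, and the perpendicular-bisector condition together with the unit-radius constraint forces the center of the circle to be $(p,0)$. Thus $\gamma_c$ is one of the two unit semicircles of $\{(x_1-p)^2+x_2^2=1\}$, tangent to $x_2=\pm 1$ at the endpoints. To select the right-bulging one, I would use continuity of $c\mapsto\gamma_c$ and matching at $c=0$: the hypothesis $u=0$ on $B(0,1)$ makes $\gamma_0$ coincide with the right half of $\partial B(0,1)$, a right-bulging unit semicircle centered at the origin, and since the bulge direction is a locally constant function of $c$, it remains right-bulging throughout. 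The level curves $\{v=-p\}$ of $v$ are exactly the right-bulging unit semicircles centered at $(p,0)$ for $p>0$, so the two families of level curves coincide.

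The main obstacle will be making the constant-curvature conclusion rigorous at potentially singular points of $\gamma_c$ in the viscosity sense; once the planar viscosity Alexandrov-type theorem alluded to in the paper is in hand, the geometric matching outlined above is essentially forced by the boundary data.
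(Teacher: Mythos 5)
Your overall strategy is the same as the paper's: invoke the viscosity Alexandrov-type machinery for the stadium (Theorem \ref{Co}, built from Lemmas \ref{CV} and \ref{Gr}) to conclude that negative level curves of $u$ consist of arcs of unit circles with endpoints on $\partial U$, and then match these arcs with the semicircular level curves of $v$. Your chord-length observation is a nice way to make explicit why an arc with one endpoint on $x_2=1$ and the other on $x_2=-1$ must be a vertical unit semicircle (the chord has length at least $2$, hence is a diameter). However, there is a genuine gap: you assume from the outset that the two endpoints of the arc lie on the top and bottom straight parts of $\partial U$, and you never exclude the other configurations that Theorem \ref{Co} leaves open. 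A priori a component of a level curve (and note that a level set need not be a single simple arc; it may have several arc components) could have both endpoints on the same straight side, one endpoint on a straight side and one on the rightmost semicircular cap of $\partial U$, or both endpoints on that cap. Ruling these out is the actual content of the paper's proof: same-side configurations contradict the strict monotonicity of $u$ along $\partial U$ (both endpoints would carry the same value $c$); the mixed straight-side/cap configuration requires a continuity argument (nearby level curves would be arcs with the same endpoint pattern and two of them would be forced to intersect inside $U$); and the both-on-cap configuration is excluded using $u=0$ on $\{x_1\le 0\}\cup \ol{B}(0,1)$ together with the boundary monotonicity. Without this case analysis the chord argument has nothing to act on, so the proof as outlined is incomplete.

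A secondary weak point is the selection of the right-bulging semicircle. The claim that $\gamma_0$ ``coincides with the right half of $\partial B(0,1)$'' is not accurate as stated, since $\{u=0\}$ is a two-dimensional region, and ``the bulge direction is a locally constant function of $c$'' is asserted without proof. The cleaner route, consistent with the machinery you already invoke, is the convexity statement in Lemma \ref{CV}/Theorem \ref{Co}: the relevant component of the superlevel set $\{u>c\}$ contains the left part of the stadium (where $u=0>c$), and if its free boundary were a left-bulging unit semicircle that component would fail to be convex (for $p$ small it would even meet $B(0,1)$, where $u=0$). This pins down the right-bulging semicircles directly and yields the same foliation as that of $v$, up to relabeling of levels, which is all one can expect since $\theta\circ v$ solves \eqref{eq:v-U} for any nondecreasing continuous $\theta$ with $\theta(0)=0$.
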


To prove Lemma \ref{Ch}, we consider a set-theoretic solution of $-\kappa-1=0$ in $U$ defined in Definition \ref{defn:set-sta}.   
\begin{thm} \label{Co}
Assume that $U$ is open and convex in $\mathbb{R}^2$.
Let $D\subset U$ be an open set and a  set-theoretic solution of $-\kappa-1=0$ on $\partial D \cap U$.
Let $D_i$ be a connected component of $D$.
Then $D_i$ is convex.
Let $S_i$ be a connected component of $\partial D_i$ in $U$. 
Then $S_i$ is an open arc of a unit circle.
\end{thm}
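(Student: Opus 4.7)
The plan rests on the fact that, in $\mathbb{R}^2$, the unit circles are the only stationary curves for the forced flow $V=\kappa+1$. Thus the set-theoretic condition $-\kappa-1=0$ on $\partial D\cap U$ should produce two-sided tangent unit disk barriers at every boundary point in $U$. I would then use the rigidity of constant-curvature curves in the plane to identify boundary arcs with unit circle arcs, and combine the barrier information with the convexity of $U$ to conclude convexity of each connected component.

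First I would extract tangent barriers. Fix $p\in\partial D\cap U$. By Definitions~\ref{defn:set-sol}--\ref{defn:set-sta}, $\mathbf 1_D$ is tested against smooth functions built by composing a cutoff with the signed distance to a candidate tangent unit circle; since a unit disk saturates $V=\kappa+1$, the supersolution property yields an open unit disk $B_p$ with $p\in\partial B_p$ and $B_p\cap D=\emptyset$ near $p$, and dually the subsolution property yields a tangent unit disk $B_p'\subset D$ near $p$. These two unit disks are externally tangent at $p$ with common tangent line, and crucially $B_p'\subset D\subset U$.

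Next I would show each $S_i$ is a unit circle arc. Fix a connected component $S_i\subset\partial D_i\cap U$. At each $p\in S_i$, the two-sided tangent unit circles force $\partial D$ to be $C^{1,1}$ at $p$ with matching tangent line and unit-modulus curvature; the only such curve through $p$ agreeing with the prescribed tangent data is one of the two tangent unit circles. Continuity of the tangent-disk construction in $p$ and connectedness of $S_i$ imply the centers of the $B_p$'s are constant along $S_i$; hence $S_i$ lies on a single unit circle, and being relatively open and connected in $\partial D_i\cap U$, it is an open arc of that circle.

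Finally I would deduce convexity. Suppose for contradiction that $D_i$ is not convex, and choose $p_1,p_2\in D_i$ with $[p_1,p_2]\not\subset D_i$; the convexity of $U$ gives $[p_1,p_2]\subset U$. Let $r,r'\in\partial D_i\cap U$ be consecutive exit and re-entry points of the segment, lying respectively on arcs of unit circles $\partial B_r,\partial B_{r'}$ by the previous step. The segment enters $B_r\subset D_i^c$ just past $r$ and exits $B_{r'}\subset D_i^c$ just before $r'$; combining this with the sub-side tangent disks $B_r',B_{r'}'\subset D_i\subset U$ and the convexity of $U$ forces $B_r=B_{r'}$, since any two distinct tangent unit disks inside the convex $U$ on either side of the non-convex excursion would violate either the inside-barrier property or the convexity of $U$. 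But then $r,r'$ lie on the same arc $S_i\subset\partial B_r$, and the straight segment $[r,r']$ would cross $\partial B_r$ transversally at its two endpoints, so $[r,r']\subset B_r$, contradicting $B_r\cap D_i=\emptyset$ together with the fact that $[r,r']$ has interior arbitrarily close to $D_i$. The main obstacle of the proof is exactly this last stage: the viscosity information is purely local, and turning it into the global obstruction above requires careful book-keeping of the inside and outside tangent disks along the segment, with the convexity of $U$ used essentially to keep the segment in the regime where local barriers apply.
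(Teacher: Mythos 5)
Your argument breaks down at the very first step, and the later steps inherit the damage. The viscosity definitions (Definitions \ref{defn:set-sol}--\ref{defn:set-sta}) only give a one-sided differential inequality \emph{when} a smooth test function happens to touch $\mathbf{1}_D$ from the appropriate side; they never assert that a touching test function of a prescribed shape exists. Hence the claim that ``the supersolution property yields an open unit disk $B_p$ with $p\in\partial B_p$ and $B_p\cap D=\emptyset$ near $p$'' (and dually an interior tangent unit disk from the subsolution property) is precisely an interior/exterior ball condition, i.e.\ a regularity statement about $\partial D$, and it does not follow from the definitions: your test function built from the signed distance to a ``candidate tangent unit circle'' can only be used once the tangency you are trying to produce is already known, so the step is circular. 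Moreover, even granting two tangent unit disks at a point $p$, your second step is a non sequitur: being squeezed at the single point $p$ between two internally/externally tangent unit circles gives pointwise second-order control of $\partial D$ at $p$, but it does not force $\partial D$ to coincide with one of those circles near $p$ (plenty of non-circular curves fit between the two disks in a neighborhood of the tangency point), so there is nothing for the ``centers are constant along $S_i$'' argument to propagate. The final convexity step then rests on these unproved claims, and the key assertion ``forces $B_r=B_{r'}$'' is not an argument that can be checked.

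The missing ingredient is a regularity mechanism that turns the viscosity relation $-\kappa-1=0$ into an equation holding along the boundary, and this is why the paper runs the proof in the opposite order from yours. Convexity of each component is proved first (Lemma \ref{CV}) by a single legitimate viscosity test: if a component had a dent, one takes a compact component $K$ of $(\ol U\setminus D)\cap\{x_2\le 0\}$ meeting a chord, touches $\partial D$ at the lowest point of $K$ with a horizontal line, and contradicts the supersolution property of $\mathbf{1}_D$ for \eqref{LE}, since a line has $\kappa=0$ and $-0-1<0$; convexity of $U$ and connectedness of the component are what make $K$ compact so that the extremal touching point exists. Only after convexity is known is $\partial D\cap U$ locally the graph of a concave function, so Alexandrov's theorem gives second derivatives a.e., the graph form of $-\kappa-1=0$ (via \cite[Section 5]{GG}) holds a.e., and integrating this ODE identifies each boundary component in $U$ with an arc of a unit circle (Lemma \ref{Gr}). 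Your proposal has no substitute for either ingredient: no proof of convexity that is independent of the (unproved) arc structure, and no regularity step replacing the concavity--Alexandrov argument. As it stands, the proposal has genuine gaps and does not prove the theorem.
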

For the proof, we first derive convexity of $D$ if $D$ is connected.
\begin{lem} \label{CV}
Assume that $U$ is open and convex in $\mathbb{R}^2$.
Let $D\subset U$ be an open set and a   set-theoretic solution of $-\kappa-1\geq 0$  on $\partial D \cap U$.
 Then each connected component of $D$ must be convex in $U$. 
\end{lem}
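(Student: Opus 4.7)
The plan is to argue by contradiction: assuming a connected component $D_i$ of $D$ is non-convex, I will produce a smooth test function violating the set-theoretic supersolution property at some $p \in \pl D_i \cap U$. The idea mirrors the smooth case, in which the condition $-\kappa-1 \ge 0$ on $\pl D_i$ translates (in the paper's sign convention) to the standard inward-normal curvature of $\pl D_i$ being at least $1$ everywhere, and a simple closed planar curve whose signed curvature is bounded below by a positive constant is necessarily convex.

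First, set $K := \ol{\co(D_i)}$ and suppose toward contradiction that $\pl D_i \subset \pl K$. Then $K^{\circ} = D_i \sqcup (K^{\circ} \setminus \ol{D_i})$ is a disjoint union of two open subsets of the connected set $K^{\circ}$; since $D_i \neq \emptyset$ we conclude $K^{\circ} \setminus \ol{D_i} = \emptyset$, hence $D_i = K^{\circ}$, which is convex---a contradiction. Thus non-convexity yields a point $p \in \pl D_i \cap K^{\circ}$; since $D_i \subset U$ and $U$ is convex, $K \subset U$, so $p \in U$.

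Second, I would construct an inscribed disk $B(y,R) \subset D_i$ tangent to $\pl D_i$ at $p$ with $R > 1$. Because $p \in K^{\circ}$, Carath\'eodory's theorem in $\R^2$ places $p$ inside a triangle $\triangle a_1 a_2 a_3$ with vertices $a_1,a_2,a_3 \in D_i$; combined with the openness and connectedness of $D_i$, this supplies the needed thickness on the $D_i$-side of a suitably chosen tangent direction at $p$, allowing one to inscribe a ball of radius greater than $1$ tangent to $\pl D_i$ at $p$ from inside. Defining $\varphi(x) := R^2 - |x-y|^2$, the function $\mathbf{1}_{D} - c\varphi$ attains a local minimum at $(p,t_0)$ for $0 < c \le 1/R^2$ and any $t_0 > 0$. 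Applying the set-theoretic supersolution property (noting $\pl_t \varphi = 0$) then yields
\[
\Big(\Div \Big(\tfrac{D\varphi}{|D\varphi|}\Big)(p) + 1\Big)\,|D\varphi(p)| \le 0,
\]
which in $\R^2$ evaluates to $2R(1 - 1/R) \le 0$, i.e.\ $R \le 1$---contradicting $R>1$.

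The main obstacle is the inscribed-disk construction. In thin or degenerate configurations a large round disk may not fit at $p$, so one may need a more flexible test function---for instance an elongated ellipse aligned with the direction in which $D_i$ has sufficient thickness (controlled by the triangle $\triangle a_1 a_2 a_3$), whose boundary curvature at $p$ is still strictly smaller than $1$ in the standard sign convention. In either case, the essential geometric input is that $p$ lies in the interior of $\co(D_i)$, so $D_i$ has enough room on one side of a tangent direction at $p$ to admit a test set whose curvature at $p$ lies strictly below the critical threshold enforced by the supersolution inequality.
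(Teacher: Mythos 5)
Your final step is fine: if a ball $B(y,R)\subset D_i$ with $R>1$ is tangent to $\partial D_i$ at a point $p\in\partial D_i\cap U$, then the test $\varphi(x)=R^2-|x-y|^2$ does violate the supersolution inequality, since it forces $\Div\bigl(D\varphi/|D\varphi|\bigr)(p)=-1/R\le -1$, i.e.\ $R\le 1$. The genuine gap is the step you yourself flag: producing such a test set tangent at the point $p$ you selected beforehand. Knowing only that $p\in\partial D_i\cap\bigl(\overline{\co D_i}\bigr)^\circ$ gives no control whatsoever on the local geometry of $D_i$ at $p$: the Carath\'eodory triangle has its \emph{vertices} in $D_i$, but its interior need not meet $D_i$ near $p$ at all (that is exactly what non-convexity means), so it yields no ``thickness'' of $D_i$ on any side of a tangent direction at $p$. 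Concretely, if $D_i$ is a thin tubular neighborhood (width $\ll1$) of a long circular arc, it is open, connected and non-convex, yet no disk of radius $>1$ fits inside $D_i$ anywhere; and if near the chosen $p$ the set $D_i$ looks like $\{x_2>x_1^2\}$ (a smooth inward dent of curvature $2$, or worse a cusp), then \emph{no} admissible test with nonvanishing gradient and curvature below the threshold touches $\partial D_i$ at $p$, even locally, although $p$ lies well inside the convex hull. The ellipse variant does not repair this: the obstruction is not the shape of the test set but the fact that at a pre-assigned boundary point there may be nothing to test with. Note also the small slip that $K=\overline{\co(D_i)}$ need not be contained in the open set $U$; what saves you is $p\in K^\circ\subset(\overline U)^\circ=U$ by convexity of $U$.

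The way out — and the paper's route — is to reverse the order of quantifiers: choose the test first and let the touching point be produced by an extremal (sliding) argument, at which the geometry needed for the test is automatic. The paper takes $P_1,P_2\in D$ whose segment leaves $\overline D$, arranges it on the $x_1$-axis, considers the compact connected component $K$ of $(\overline U\setminus D)\cap\{x_2\le0\}$ meeting the segment, and tests at the lowest point $Q\in\partial D$ of $K$ with the horizontal line $x_2=x_2^*$: there the half-plane below is locally on the $D$-side, the linear test has zero curvature, and the supersolution property, which demands curvature $\le-1$, is violated. (In particular the test only needs to hold locally and a flat test suffices; no globally inscribed large disk is required.) Your argument would become correct if you replaced ``pick $p$, then build a disk tangent at $p$'' by such a first-contact construction, but as written the existence of the inscribed disk (or ellipse) is unjustified and false in general, so the proof does not go through.
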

\begin{proof}
Assume that $D$ is not convex but connected.
 Then there are points $P_1, P_2 \in D$ such that the line segment $P_1 P_2$ intersects $U \setminus \overline{D}$.
 We may assume that $P_1 P_2$ is a closed, nontrivial interval of the $x_1$-axis.
 Without loss of generality, we may assume further that there is  a connected component $K$ of the closed set $(\ol U \setminus D) \cap \{x_2 \leq 0\}$ which contains some part of the segment $P_1 P_2$.
 The set $K$ is compact since $U$ is convex and $D$ is connected. 

Clearly, there is a point $Q=(x_1^*,x_2^*) \in \partial D$ such that
\[
	x_2^* = \inf \left\{ x_2 \in \mathbb{R} \,:\,(x_1,x_2) \in K \text{ for some } x_1\in\mathbb{R} \right\}.
\]
At the point $Q$, one is able to use the straight line $x_2=x_2^*$ as a test function to yield a contradiction since $\mathbf{1}_D$ must be a supersolution of \eqref{LE}.
 The convexity of each connected component of $D$ follows right away and the proof is now complete.
\end{proof}
\begin{lem} \label{Gr}
Assume that $U$ is open and convex in $\mathbb{R}^2$.
Let $D \subset U$ be an open, convex set and  a set-theoretic solution of $-\kappa-1=0$ on $\partial D \cap U$.
 Then, the boundary of $D$ in $U$ consists of open arcs of unit circle. 
\end{lem}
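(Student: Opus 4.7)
Fix $p \in \partial D \cap U$. I will show that $\partial D$ is locally near $p$ an arc of a unit circle; the description of the connected components $S_i$ then follows by a continuation argument, because the center and radius of the local osculating circle propagate continuously along $\partial D \cap U$. The argument has two main steps.

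\emph{Step 1 (no corners).} First I rule out that $p$ is a corner of the convex set $D$. Suppose $T_D(p)$ has opening strictly less than $\pi$. Choose a supporting line $L$ to $D$ at $p$ whose outer normal lies in the interior of the outer normal cone, and pick coordinates so that $p = 0$ and $L$ is the $x_1$-axis. By convexity and the strict sub-wedge property, there exist $\eta > 0$ and a neighborhood $V$ of $p$ with $\overline D \cap V \subset \{x_2 \ge \eta |x_1|\}$. For each $c > 0$ consider the smooth test function
\[
\varphi_c(x_1,x_2) := 1 + x_2 - \tfrac{c}{2}\,x_1^2.
\]
Then $\varphi_c(p) = 1 = \mathbf 1_D^{\ast}(p)$, and after shrinking $V$ so that $|x_1| \le 2\eta/c$ there, $\varphi_c \ge \mathbf 1_D^{\ast}$ on $V$; hence $\varphi_c$ is admissible in the set-theoretic sub-solution test for \eqref{LE} at $p$. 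A direct computation gives $D\varphi_c(p) = (0,1)$ and $\Div(D\varphi_c/|D\varphi_c|)(p) = -c$, so the sub-solution inequality $-(\Div(D\varphi_c/|D\varphi_c|) + 1)|D\varphi_c| \le 0$ at $p$ reduces to $c \le 1$. Choosing any $c > 1$ is a contradiction, so $\partial D$ must be $C^1$ at $p$.

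\emph{Step 2 (local unit curvature).} Once $\partial D$ is $C^1$ at $p$, convexity lets me write $\partial D$ locally in rotated coordinates as the graph $x_2 = f(x_1)$ of a convex $C^1$ function $f$ with $f(0) = f'(0) = 0$ and $D \cap (I \times \R) = \{x_2 > f(x_1)\}$. Testing the set-theoretic sub-solution property against all $\varphi(x) = 1 + x_2 - g(x_1)$ with smooth $g \le f$ near $0$, $g(0) = g'(0) = 0$, and the set-theoretic super-solution property against all $\varphi(x) = x_2 - g(x_1)$ with smooth $g \ge f$ near $0$, $g(0) = g'(0) = 0$, the divergence computation of Step 1 yields $g''(0) \le 1$ and $g''(0) \ge 1$, respectively. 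Repeating this analysis at every interior point of $I$ (after a local rotation that realizes that point as $0$ with horizontal tangent) shows that $f$ is a continuous viscosity solution on $I$ of the prescribed-curvature ODE
\[
\frac{f''(x_1)}{(1 + f'(x_1)^2)^{3/2}} = 1.
\]
The right-hand side is smooth and uniformly positive on compact subsets, so standard viscosity-to-classical regularity upgrades $f$ to a classical $C^2$ solution. Integrating once gives $f'(x_1)/\sqrt{1 + f'(x_1)^2} = x_1 + C_1$ and integrating again gives $f(x_1) = C_2 - \sqrt{1 - (x_1 + C_1)^2}$, so the graph is an arc of the unit circle centered at $(-C_1, C_2)$. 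Since the center and radius of this local circle are determined by the first-order data of $f$ and are locally constant by the ODE, each connected component $S_i$ of $\partial D \cap U$ lies on a single unit circle; being a connected relatively open subset of $\partial D \cap U$, it is an open arc of that circle.

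\textbf{Main obstacle.} The delicate point is Step 1. One must fabricate a smooth upper test satisfying $\varphi_c \ge \mathbf 1_D^{\ast}$ on a \emph{full} neighborhood of $p$, not merely along the supporting line; this relies essentially on the convexity of $D$ (supplied by Lemma~\ref{CV}) together with the strict sub-wedge structure at a corner, which allows the linear term $x_2$ in $\varphi_c$ to dominate the parabolic correction $-\tfrac{c}{2}x_1^2$ on all of $\overline D \cap V$. Once the $C^1$ regularity is in hand, the remainder of the proof—deriving the prescribed-curvature ODE and integrating it—is routine.
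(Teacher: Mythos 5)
Your proposal is correct in substance, and it takes a genuinely different, more self-contained route than the paper. The paper's proof is: convexity of $D$ lets one write $\partial D$ locally as the graph of a concave function (in its orientation); the results of \cite[Section 5]{GG} are invoked to transfer the viscosity property of $\mathbf 1_D$ for \eqref{LE} to the one-dimensional graph equation; Alexandrov's theorem for convex functions then gives the curvature ODE almost everywhere, and one integrates to get unit-circle arcs. You instead argue directly with test functions for $\mathbf 1_D$: your Step 1, excluding corners of the convex set by wedge-adapted quadratic tests with curvature parameter $c>1$, makes explicit a point the paper leaves implicit, and your Step 2 re-derives by hand exactly the two-sided graph-equation inequalities that the paper imports from \cite{GG}; your sign conventions and divergence computations check out (they are consistent with the unit disk being stationary), and the admissibility of your tests for the upper and lower envelopes of $\mathbf 1_D$ is verified correctly. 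The one place where you are too brief is the sentence ``standard viscosity-to-classical regularity upgrades $f$ to a classical $C^2$ solution'': this is true but deserves a justification, since an a.e.\ or viscosity formulation of $f''=(1+(f')^2)^{3/2}$ does not by itself rule out a singular part of $f''$. Two quick patches: (i) on any compact subinterval where $|f'|\le L$, your two inequalities say $f''\ge 1$ and $f''\le (1+L^2)^{3/2}$ in the viscosity sense, so $f$ is both semiconvex and semiconcave, hence $C^{1,1}$; then $f'$ is absolutely continuous, the ODE holds a.e.\ (Alexandrov, as in the paper), and it integrates to a unit-circle arc; or (ii) compare $f$ on small subintervals with the classical circle-arc solutions of the two-point Dirichlet problem for the uniformly elliptic one-dimensional equation. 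With such a remark added, your argument is complete, and it has the advantage of not relying on the reduction from \cite{GG}, at the cost of being longer; also note that for the curvature-ODE step the rotation at each point is not needed, since testing in the fixed graph coordinates already produces the factor $(1+(g')^2)^{3/2}$.
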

\begin{proof}
Since $D$ is convex, $\partial D$ is locally represented as the graphs of concave functions.
 If $\partial D$ is locally represented as the graph of a concave function $x_2=f(x_1)$, then $f$ must satisfy the graph equation of $-\kappa-1=0$ in viscosity sense and $D$ is locally represented as $\{(x_1,x_2)\,:\, x_2>f(x_1)\}$ by applying results of \cite[Section 5]{GG}.
 Since $f$ is concave, it is twice differentiable a.e. thanks to Alexandrov's theorem for convex functions.
 Then, $f$ satisfies
\[
	-\frac{d}{dx} \frac{f'(x)}{\left(1+\left(f'(x)\right)^2\right)^{1/2}} -1 = 0
\]
almost everywhere.
 This implies that $x_2=f(x_1)$ must be a part of the graph of $\sqrt{1-x^2_1}$ up to translation.
\end{proof}

Theorem \ref{Co} follows immediately from Lemma \ref{CV} and Lemma \ref{Gr}.
 If $U=\mathbb{R}^2$, then by Theorem \ref{Co}, $D_i$ is convex so that $S_i=\partial D_i$ and $S_i$ is an open arc of a unit circle.
 This means that $D_i$ is an open unit disk.
It is worth emphasizing that this answers a question raised by \cite[Problem C]{GTZ} at least partially. 
\begin{cor} \label{AL}
Let $D \subset \R^2$ be an open set and a set-theoretic solution of $-\kappa-1=0$ on $\partial D$.
 Then $D$ is an open unit disk if $D$ is connected.
 In general, each connected component of $U$ is an open unit disk. 
\end{cor}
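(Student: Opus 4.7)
The plan is to deduce Corollary~\ref{AL} directly from Theorem~\ref{Co} applied with $U=\mathbb{R}^2$, supplemented by a short topological argument. Fix a connected component $D_i$ of $D$. Theorem~\ref{Co} yields that $D_i$ is convex and that every connected component $S_i$ of $\partial D_i$ is an open arc of a unit circle.

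The first step is to upgrade ``open arc'' to ``entire unit circle.'' The set $\partial D_i$ is closed in $\mathbb{R}^2$, so each of its connected components is closed as well (connected components of a closed set in a metric space are always closed). However, a proper open arc of a circle has two limit points---its endpoints on the ambient circle---that do not belong to the arc, so such an arc cannot be closed in $\mathbb{R}^2$. The only open arc of a unit circle that is closed in $\mathbb{R}^2$ is the full circle. Hence $S_i=\partial B(p_i,1)$ for some $p_i\in\mathbb{R}^2$.

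The second step is to use convexity to conclude $D_i=B(p_i,1)$. Since $D_i$ is open, connected, and disjoint from $\partial B(p_i,1)\subset\partial D_i$, it must lie in exactly one of the two components of $\mathbb{R}^2\setminus\partial B(p_i,1)$. The exterior of a disk is not convex, so the convex set $D_i$ lies in the interior, giving $D_i\subset B(p_i,1)$. Conversely, the closed convex set $\overline{D_i}$ contains $\partial B(p_i,1)$, and therefore contains its convex hull $\overline{B(p_i,1)}$; taking interiors (using that $D_i=(\overline{D_i})^{\circ}$ for an open convex set) gives $D_i\supset B(p_i,1)$. Hence $D_i=B(p_i,1)$, an open unit disk, and the same reasoning applies to every connected component of $D$.

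The only mildly technical point is the topological upgrade in the first step---that connected components of a closed subset of $\mathbb{R}^2$ are closed and that a proper open arc of a circle is not closed in $\mathbb{R}^2$---both of which are standard. With Theorem~\ref{Co} in hand, the corollary is essentially immediate, and no further analytic input is needed.
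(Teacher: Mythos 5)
Your proof is correct and follows essentially the same route as the paper, which obtains the corollary in two lines by applying Theorem \ref{Co} with $U=\mathbb{R}^2$ and using convexity of $D_i$; you simply make the topological bookkeeping explicit (connected components of the closed set $\partial D_i$ are closed, so the ``open arc'' must be a full unit circle, and the convex-hull/interior argument then pins down $D_i$). One small caveat: the sentence ``the exterior of a disk is not convex, so the convex set $D_i$ lies in the interior'' is a non sequitur as written (a convex set can perfectly well be contained in a non-convex one, e.g.\ a half-plane inside the exterior), but your own subsequent step repairs it: since $\overline{D_i}$ is convex and contains $\partial B(p_i,1)$, it contains $\overline{B(p_i,1)}$, hence $D_i=(\overline{D_i})^{\circ}\supset B(p_i,1)$, and this together with $D_i$ being connected and disjoint from $\partial B(p_i,1)$ already forces $D_i\subset B(p_i,1)$, so the conclusion $D_i=B(p_i,1)$ stands.
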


As an application of Theorem \ref{Co}, we consider a kind of Alexandrov's problem in $\mathbb{R}^2$ in viscosity sense.
 Assume that $\{D_t\}_{t\in I}$ is an open evolution of $V=\kappa+1$ and $E_t=\overline{D_t}$ is a closed evolution for $V=\kappa+1$ as in \cite{G-book}.
 In other words, there is a uniformly continuous function $u$ in $\mathbb{R}^2 \times \overline{I}$ such that $u$ solves \eqref{LE} in $\mathbb{R}^2 \times I$ in viscosity sense with the property that
\begin{align*}
	E_t &= \left\{ x \subset \mathbb{R}^2 \,:\, u(x,t) \geq 0 \right\} \\
	D_t &= \left\{ x \in \mathbb{R}^2 \,:\, u(x,t) > 0 \right\}.
\end{align*}
for $t\in I$.
 In general, the set $\{u\geq 0\}$ can be strictly larger that $\overline{\{u>0\}}$ even if initially $\left\{ x \,:\, u(x,0)\geq 0 \right\}$ is the closure of $\left\{ x \,:\, u(x,0)>0 \right\}$.
 Such a phenomenon is called fattening as pointed out by \cite{ES1} for the mean curvature flow (see also \cite{G-book}).
\begin{thm} \label{Alex}
Let $D$ be a stationary set-theoretic solution of $V=\kappa+1$ in $\mathbb{R}^2$.
 Assume that $\overline{D}$ is a closed evolution for $V=\kappa+1$ in $\mathbb{R}^2$.
 Then $D$ consists of a family of unit disks whose closures are mutually disjoint. 
\end{thm}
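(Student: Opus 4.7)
The plan is to first pin down the shape of each component of $D$ using the results already established in this section, and then to rule out tangential contact between distinct components by a barrier/fattening argument exploiting the closed-evolution hypothesis.

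Since $D$ is a stationary set-theoretic solution of $V=\kappa+1$, Definition~\ref{defn:set-sta} says that $\mathbf{1}_D$ is both a viscosity sub- and supersolution of \eqref{LE}. Hence the hypotheses of Corollary~\ref{AL} are met and each connected component of $D$ is an open unit disk; write $D=\bigsqcup_{i\in I} D_i$ with $D_i=B(c_i,1)$. It remains to show that the closed unit disks $\overline{D_i}$ are pairwise disjoint. Suppose, for contradiction, that $\overline{D_i}\cap\overline{D_j}\neq\emptyset$ for some $i\neq j$. Two distinct unit disks can meet only in a single external tangency, so $|c_i-c_j|=2$, and after a rigid motion we may assume $c_i=(-1,0)$, $c_j=(1,0)$, and $p=0$. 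Near $p$, the complement $\overline D^c$ contains two cuspidal wedges whose widths in the $x_1$-direction are of order $x_2^2$ as $x_2\to 0$.

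By the closed-evolution hypothesis there is a uniformly continuous viscosity solution $u$ of \eqref{LE} on $\R^2\times[0,\infty)$ with $\{u(\cdot,t)\ge 0\}=\overline D$ and $\{u(\cdot,t)>0\}=D$ for every $t\ge 0$. In particular, $u(p,t)=0$ and the zero level set $\{u(\cdot,t)=0\}=\partial D$ carries a figure-eight singularity at $p$ for every $t$. The contradiction will come from showing that this singularity must \emph{fatten}. Concretely, the idea is to sandwich $\overline D$ between two families of barriers: from above by the smooth peanut $P_\ep=\{x\,:\,d(x,\overline D)\le\ep\}$, whose boundary in the ``bridge'' region near $p$ has $\kappa>-1$ and hence propagates outward under $V=\kappa+1$ with a speed bounded below by a quantity independent of $\ep$; and from below by the stationary family $\overline{D_i}\cup\overline{D_j}=\overline D$, which consists of two stationary closed unit disks. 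Applying the comparison principle for \eqref{LE} to initial data whose nonnegative sublevel sets are $P_\ep$ and $\overline D$ respectively, we obtain $\overline D\subset E_t\subset P_\ep(t)$ for every $\ep>0$ and $t\ge 0$, where $E_t=\{u(\cdot,t)\ge 0\}$ and $P_\ep(t)$ is the closed evolution of $P_\ep$. The bridge thickness of $P_\ep(t)$ near $p$ stays bounded below by a positive quantity depending only on $t$ as $\ep\downarrow 0$; combined with the lower bound $E_t\supset \overline D$, this forces $E_t$ to contain a neighborhood of $p$ in its interior for every small $t>0$, contradicting the assumed stationarity $E_t=\overline D$.

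The technical heart of the argument, and the main obstacle, is verifying that the bridge of $P_\ep(t)$ retains a thickness bounded away from zero uniformly as $\ep\downarrow 0$. This is the forced-curvature analogue of the classical figure-eight fattening phenomenon for pure mean-curvature flow, and here the ``$+1$'' driving term is the one that produces the fattening rather than resists it. One rigorous route is to build an explicit smooth barrier near $p$: the bridge of $P_\ep$ is almost flat and hence evolves, to leading order, like a half-plane with velocity $V=0+1=1$; comparing the closed evolution with this translating half-plane via the uniqueness/comparison theory for \eqref{LE} from \cite{G-book, CGG, ES1} yields the desired uniform lower bound and completes the argument.
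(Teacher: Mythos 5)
Your first step coincides with the paper's: Corollary \ref{AL} gives that every connected component of $D$ is an open unit disk, and the whole issue is to exclude external tangency of two closed disks. At the tangency point the paper simply invokes the known fattening result for figure-eight type initial data under $V=\kappa+1$ from \cite{Z}, whereas you try to reprove the fattening by a barrier argument; it is there that your proposal has a genuine gap. From the sandwich $\overline D\subseteq E_t\subseteq P_\ep(t)$ you conclude that $E_t$ must contain a neighborhood of the tangency point $p$, but this inference is invalid: $E_t=\overline D$ satisfies both inclusions no matter how fat the upper barrier $P_\ep(t)$ is, so the outer comparison by itself exerts no outward push on $E_t$ whatsoever. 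What the fattening argument actually requires is a link between the closed evolution and the evolutions of strictly larger sets, namely $\{u(\cdot,t)\ge 0\}=\bigcap_{\ep>0}\{u(\cdot,t)\ge-\ep\}$, together with the observation that each superlevel set $\{u_0\ge-\ep\}$ contains a closed $\delta$-neighborhood $P_\delta$ of $\overline D$; then a lower bound, \emph{uniform in} $\delta$, on how the evolution of $P_\delta$ bulges through the neck transfers to $E_t$ and contradicts $E_t=\overline D$. Your write-up never makes this connection, so as stated the barrier scheme proves nothing about $E_t$.

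Moreover, even with the scheme repaired, the quantitative core --- that the closed evolution of $P_\delta$ contains a ball $B(p,r(t))$ with $r(t)>0$ independent of $\delta$ for small $t>0$ --- is only asserted. The proposed comparison with a translating half-plane of speed $1$ is not a legitimate lower barrier: a half-plane is unbounded and is not contained in the thin neck region, and any localized replacement (note that small disks of radius $r<1$ \emph{shrink} under $V=\kappa+1$, so they cannot serve) requires a genuine construction exploiting the reentrant corners of $\partial P_\delta$ near $p$. This is precisely the content of the fattening theorem for the singular figure-eight curve that the paper cites from \cite{Z}; either cite it, as the paper does, or supply that construction in full.
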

\begin{proof}
Because of Corollary \ref{AL} each component of $D$ is an open unit disk.
 If the closure touches, it is known that a fattening phenomenon occurs as pointed out by \cite{Z}.
 Then $E=\overline{D}$ cannot be a closed evolution.
 This is a contradiction, and hence, the closure of each connected component does not intersect.
 The proof is now complete. 
\end{proof}
\begin{proof}[Proof of Lemma {\rm\ref{Ch}}]
By Theorem \ref{Co}, we see that a negative level set of $u$ in $U$ consists of  open arcs of unit circles.
 The endpoints of these arcs must be on the boundary of $U$  ($(x_1,x_2)$ with $0<x_1\leq a$ and $x_2=\pm 1$, and the right most semicircle of $\partial U$).
 Both endpoints of an arc cannot be  on $x_2=1$ because of the assumption on the monotonicity of $u$ on the boundary.
 Similarly, both endpoints of an arc  cannot be on $x_2=-1$.
Also, it is not possible that, for a given arc, one endpoint is on $x_2=1$ and one endpoint is on the right most semicircle of $\partial U$ as then $u$ cannot be continuous because each level set nearby must be an arc of a unit circle that has the same property (one endpoint is on $x_2=1$ and one endpoint is on the right most semicircle of $\partial U$), and two of these level sets must intersect in $U$.

 Thus, we conclude that each negative level curve of $u$ contains a semicircle with one endpoint on $x_2=1$ and the other endpoint on $x_2=-1$ or an arc of a unit circle with two endpoints both on the right most semicircle of $\partial U$.
 Since $u(x)=0$ for $x\in U$ such that $x_1 \leq 0$ or $|x| \leq 1$, and $u$ is strictly decreasing in $x_1$ for $x_1>0$ at the boundary, 
 the second scenario cannot happen.
Therefore, the level curves  of  $\{u(x)=c\}$ for $c<0$ give the same foliation as those of $v$.

 Note that one can only compare $u$ with $v$ by its level curves because all functions $\theta\circ v$ with nondecreasing continuous $\theta$ such that $\theta(0)=0$ solve \eqref{eq:v-U}.
\end{proof}

We now conclude the paper by pointing out that 
the result of Lemma \ref{lem:non-unique} gives us some further intuition that it is quite complicated to get convergence result \eqref{conv} in the most general setting.
It gives us a fact that it is not easy to understand solutions of \eqref{eq:v-U}, and more generally, solutions of \eqref{eq:static}, and we probably need to be able characterize all solutions to \eqref{eq:v-U} in order to proceed further.


\bibliographystyle{amsplain}
\providecommand{\bysame}{\leavevmode\hbox to3em{\hrulefill}\thinspace}
\providecommand{\MR}{\relax\ifhmode\unskip\space\fi MR }
\providecommand{\MRhref}[2]{%
  \href{http://www.ams.org/mathscinet-getitem?mr=#1}{#2}
}
\providecommand{\href}[2]{#2}

\end{document}